\def\R{\mathbb{R}}
\def\rP{\mathbb{P}}
\def\argmax{\mathop{\rm arg\, max}}
\def\X{{\mathcal X}}
\def\S{{\mathcal S}}
\def\bdelta{{\boldsymbol \delta}}
\def\bxi{{\boldsymbol \xi}}
\def\bgamma{{\boldsymbol \gamma}}
\def\bGamma{{\boldsymbol \Gamma}}
\def\bu{{\bf u}}
\def\by{{\bf y}}
\def\rB{{\mathsf{B}}}
\def\reg{{\mathsf{reg}}}
\def\sX{{\mathsf X}}
\def\sY{{\mathsf Y}}
\def\sE{{\mathsf E}}
\def\sU{{\mathsf U}}
\theoremstyle{remark}
\newtheorem{definition}{Definition}
\newtheorem{theorem}{Theorem}
\newtheorem{corollary}{Corollary}
\newtheorem{proposition}{Proposition}
\newtheorem{lemma}{Lemma}
\theoremstyle{remark}
\newtheorem{assumption}{Assumption}
\begin{document}
\sloppy
\title{Regularized Stochastic Team Problems
\author{Naci Saldi
\thanks{The author is with the Department of Natural and Mathematical Sciences, Ozyegin University, Cekmekoy, Istanbul, Turkey, Email: naci.saldi@ozyegin.edu.tr}
}}
\maketitle

\begin{abstract}
In this paper, we introduce regularized stochastic team problems. Under mild assumptions, we prove that there exists an unique fixed point of the best response operator, where this unique fixed point is the optimal regularized team decision rule. Then, we establish an asynchronous distributed algorithm to compute this optimal strategy. We also provide a bound that shows how the optimal regularized team decision rule performs in the original stochastic team problem. 
\end{abstract}

\begin{keywords}                           
Team decision theory, decentralized stochastic control, regularization, person-by-person optimality. 
\end{keywords}

\section{Introduction}\label{sec1}

Team decision theory has been introduced by Marschak \cite{mar55} to study decisions of agents that are acting collectively based on their private information to optimize a common reward function. Radner \cite{rad62} proved fundamental results for static teams and in particular established connections between Nash equilibrium and team-optimality. Witsenhausen's seminal papers \cite{wit71,wit75,wit88,WitsenStandard,WitsenhausenSIAM71,Wit68} on characterization and classification of information structures have been crucial in the progress of our understanding of teams. In particular, the celebrated counterexample of Witsenhausen \cite{Wit68} demonstrated the challenges that arise due to a decentralized information structure in teams. We refer the reader to \cite{YukselBasarBook} for a more comprehensive overview of team decision theory and a detailed literature review.

In teams, due to its decentralized nature, computing the optimal decision rule is a NP-hard problem \cite{PaTs86}. Indeed, even  
establishing the existence and structure of optimal policies is a challenging problem. Existence of optimal policies for static teams and a class of sequential dynamic teams has been shown recently in \cite{GuYuBaLa15,Sal20-D,YukselWitsenStandardArXiv}. For a class of teams which are {\it convex}, one can reduce the search space to a smaller parametric class of policies (see \cite{rad62,KraMar82,WaSc00}, and for a comprehensive review, see \cite{YukselBasarBook}).

In the literature, there are mainly three approaches to compute the optimal or sub-optimal team decision rules \cite{CDCTutorial}: (i) the common information approach \cite{NayyarBookChapter,NayyarMahajanTeneketzis}, (ii) the designer's approach \cite{WitsenStandard,MahajanThesis,MahTen09,YukselWitsenStandardArXiv}, and (iii) the person-by-person approach \cite{rad62,marrad72}.
In the common information approach, it is assumed that agents share some common information with each other (i.e., delayed observation sharing or periodic observation sharing). Therefore, one can partition the information of each agent into two as the common information and the private information. In other words, there is a coordinator that observes the common information and shares this information with other agents. In the common information approach, the idea is to formulate the problem as a centralized stochastic control problem from the viewpoint of a coordinator. In this centralized control problem, coordinator observes the common information and chooses controllers, where these controllers maps private information of each agent to their actions. With this viewpoint, one can then use classical stochastic control techniques (such as dynamic programming) to compute the optimal team decision rule.

The designer's approach is very similar to the common information approach. Namely, although the original problem has decentralized information structure, it is a centralized decision
problem from the viewpoint of a system designer that (centrally) chooses the policies of all the agents. Hence, one can obtain a dynamic programming recursion of this centralized decision problem by identifying an appropriate information state for the designer. However, in this approach, the action space of the designer is in general too large, and so, computing the optimal policy is mostly unfeasible.

The person-by-person approach is the technique that is adopted from game theory for computing the Nash equilibrium. This approach can be described as follows. Fix policies of all agents except Agent~$i$ and consider the sub-problem of optimally choosing the best policy of Agent~$i$ against the policies of other agents. This is indeed a centralized control problem as the policies of other agents are fixed. Hence, one can use classical stochastic control techniques to arrive at the best response policy of Agent~$i$. Iterating in this manner for each agent, the computed policies eventually converge to the Nash equilibrium. However, although optimal team decision rule is a Nash equilibrium, in team problems, there are in general more than one Nash equilibrium (sometimes we may have infinitely many Nash equilibria), and so, this procedure mostly converges to a sub-optimal team decision rule (see \cite{Bas87} and \cite[Section 2.6]{YukselBasarBook} for the conditions of existence of the unique Nash equilibrium). Note that if one introduce some regularization term to the reward function, it is possible to prove that there exists an unique Nash equilibrium. If the optimal team decision rule exists, this unique Nash equilibrium is also the unique optimal team decision rule. Therefore, the person-by-person approach converges to the optimal team decision rule. This is indeed the approach adapted in this paper. 

Note that if there is a misspecification in decentralized control models, above-mentioned algorithms often results in policies that are far from optimal or sub-optimal. This is due to lack of continuity of the reward function and the optimal policy with respect to the components (i.e., observation channels) of the problem. Therefore, making use of regularization provides a way to overcome this robustness problem. Most recent learning algorithms for control problems also use regularization to increase robustness, and this regularization is generally established via entropy or relative entropy. We refer the reader to \cite{GeScPi19} for an exhaustive review of the literature on regularized centralized stochastic control problems and \cite{NeJoGo17} for a general framework on entropy-regularized centralized stochastic control problems.

In this paper, we introduce regularized team problems with finite observation and action spaces, analogous to regularized centralized stochastic control problems.  We introduce regularization as an additive term to the reward function. We then define best response operator on the set of policies as described above and prove that it has an unique fixed point, where this unique fixed point is the unique optimal team decision rule. Then, we establish an asynchronous distributed algorithm to compute this optimal policy. Note that introducing  the regularization term is supposed to make algorithm more robust against modeling uncertainties. We also provide a bound that shows how the optimal regularized team decision rule performs in the original team problem. Therefore, the solution of the regularized team problem provides an upper bound and a lower bound to the original team problem, which can be used to analyze the performance of certain numerical algorithms developed in the literature.

The paper is organized as follows. In Section~\ref{unregularized}, we formulate classical stochastic dynamic team model and its static reduction. In Section~\ref{regularized}, we introduce regularized stochastic teams. In Section~\ref{bestresponse}, we define best response operator for regularized teams and prove the existence of optimal regularized team decision rules via establishing uniqueness of the fixed point of the best response operator. In Section~\ref{asyiteration}, we propose an asynchronous distributed algorithm to compute the optimal regularized team decision rule.
Section~\ref{conc} concludes the paper.

\section{Unregularized Stochastic Team Model}\label{unregularized}

To model unregularized stochastic teams, we use Witsenhausen's {\it intrinsic model} \cite{wit75}. In this model, we have the following components:
$$
\bigl\{ (\sX,\X), \rP, \sU_i, \sY_i, i=1,\ldots,N\bigr\} \nonumber
$$
where $(\sX,\X)$ is a Borel space (Borel subset of complete and separable metric space) denoting the state space, the finite sets\footnote{We note that a stochastic team with the continuous observation and action spaces can be asymptotically approximated by stochastic teams with finite observation and action spaces \cite{SaYuLi17-app}. Therefore, results developed in this paper can provide insights about the near optimal policies for stochastic teams with continuous observation and action spaces.} $\sU_i$ and $\sY_i$ ($i=1,\ldots,N$) denote the action space and the observation space of Agent~$i$, respectively. Here $N$ is the number of agents. For each $i$, the observations and actions of Agent~$i$ are denoted by $Y_i$ and $U_i$, respectively. The $\sY_i$-valued observation variable for Agent~$i$ is given by 
$$Y_i \sim W_i(\cdot|X, {\bf U}^{^{(1:i-1)}}),$$ 
where $W_i$ is a conditional probability on $\sY_i$ given $\sX\times \sU^{^{(1:i-1)}}$, where $\sU^{^{(1:i-1)}} \coloneqq \prod_{k=1}^{i-1} \sU_k$. A {\it Borel probability measure} $\rP$ on $\sX$ describes the uncertainty on the state variable $X$.

A joint control strategy $\bgamma= (\gamma_1, \gamma_2, \dots, \gamma_N)$, also called {\it policy}, is an $N$-tuple of measurable functions 
$$\gamma_i:\sY_i \to \Delta_i, \, i=1,\ldots,N,$$
where $\Delta_i$ is the set of probability distributions on the finite set $\sU_i$. Here, $\Delta_i$ is endowed with $l_1$-norm $\|\cdot\|_1$ since it can be viewed as a subset of $\R^{\sU_i}$. The $\sigma$-algebra on $\Delta_i$ is the Borel $\sigma$-algebra generated by $l_1$-norm topology. Note that $U_i \sim \gamma_i(\cdot|Y_i)$; that is, $\gamma_i(\cdot|Y_i)$ is a conditional distribution of the action $U_i$ of Agent~$i$ given its observation $Y_i$. Let $\Gamma_i$ denote the set of all admissible policies for Agent~$i$; that is, the set of all measurable functions from $\sY_i$ to $\Delta_i$ and let $\bGamma = \prod_{i=1}^N \Gamma_i$.

Under this intrinsic model, a sequential team problem is {\it dynamic} if the information $Y_i$ available to at least one agent~$i$ is affected by the action of at least one other agent~$k\neq i$. A decentralized problem is {\it static}, if the information available at every decision maker is only affected by state of the nature; that is, no other decision maker can affect the information at any given decision maker.

For any $\bgamma = (\gamma_1, \cdots, \gamma_N)$, we let the
unregularized reward of the team problem be defined by
\[J(\bgamma) \coloneqq E\left[p(X,{\bf Y},{\bf U})\right]\]
for some reward function 
$$p: \sX \times \prod_{i=1}^{N} \sY_i \times \prod_{i=1}^N \sU_i \to [0,\infty),$$ 
where ${\bf U} \coloneqq (U_1,\ldots,U_N)$ and ${\bf Y} \coloneqq (Y_1,\ldots,Y_N)$.

\smallskip

\begin{definition}\label{optimalstrategy}
For a given stochastic team problem, a policy (strategy)
$\bgamma^*:=({\gamma_1}^*,\ldots, {\gamma_N}^*)\in {\bf \Gamma}$ is
an {\it optimal team decision rule} if
\begin{equation}
J(\bgamma^*)=\sup_{\bgamma \in {\bGamma}}
J(\bgamma)=:J^*. \nonumber
\end{equation}
The reward level $J^*$ achieved by this strategy is the {\it optimal value of the team}. A policy (strategy)
$\bgamma^*:=({\gamma_1}^*,\ldots, {\gamma_N}^*)\in {\bf \Gamma}$ is {\it person-by-person optimal} if
\begin{equation}
J(\bgamma^*)=\sup_{\gamma_i \in \Gamma_i}
J(\bgamma^{*,-i},\gamma_i), \text{ } \text{for all} \,\, i=1,\ldots,N, \nonumber
\end{equation}
where $\bgamma^{*,-i} \coloneqq (\gamma_k^*)_{k \neq i}$.
\end{definition}

\subsection{Independent Static Reduction of Dynamic Team Model}\label{staticreduction}

In this section, we review the equivalence between dynamic teams and their independent static reduction (this is called {\it the equivalent model} \cite{wit88}\index{Witsenhausen's Equivalent Model}). Consider a dynamic team setting. Note that, for a fixed choice of $\bgamma$, the reward function $J(\bgamma)$ can be written as
\begin{align}
&J(\bgamma) \nonumber \\
&= \int_{\sX} \, \sum_{\by,\bu} p(x,\by,\bu) \,  \left( \prod_{i=1}^N W_i(y_i|x,\bu^{^{(1:i-1)}}) \, \gamma_i(u_i|y_i) \right)  \hspace{-3pt} \rP(dx) \nonumber \\
%&= \sum_{\by,\bu} \left(\int_{\sX} \biggl[ p(x,\by,\bu) \prod_{i=1}^N |\sY_i| \cdot \, W_i(y_i|x,\bu^{^{(1:i-1)}}) \biggr] \, \rP(dx) \right) \, \prod_{i=1}^N \frac{1}{|\sY_i|} \gamma_i(u_i|y_i) \nonumber \\
&= \sum_{\by,\bu} r(\by,\bu)\,  \prod_{i=1}^N \pi_i(y_i) \,  \gamma_i(u_i|y_i), \nonumber
\end{align}
where
\begin{align}
r(\by,\bu) &\coloneqq \int_{\sX} \biggl[ p(x,\by,\bu) \prod_{i=1}^N |\sY_i| \cdot \, W_i(y_i|x,\bu^{^{(1:i-1)}}) \biggr] \rP(dx) \nonumber,
\end{align}
and $\pi_i$ is uniform distribution on $\sY_i$. Now, the observations can be regarded as independent, and by incorporating the $W_i$ terms into $p$, we can obtain an equivalent {\it independent static team} problem. Hence, the essential step is to appropriately change the probability measure of observations and the reward function. This method is discrete-time version of Girsanov change of measure method. Indeed, a continuous-time generalization of static reduction via Girsanov's method has been presented by Charalambous and Ahmed \cite{charalambous2014equivalence}.

\subsection{Another Equivalent Team Model}\label{equivalent}

In this section, we re-formulate static reduction of dynamic team model as follows. In this model, we take the action space of each Agent~$i$ as the set of probability distributions $\Delta_i$ on the original action space $\sU_i$. Hence, in this new model, strategy $\gamma_i$ of Agent~$i$ can be viewed as a measurable function from the observation space $\sY_i$ to the action space $\Delta_i$ (not a conditional distribution). The new reward function $R: \prod_{i=1}^N \sY_i \times \prod_{i=1}^N \Delta_i \rightarrow [0,\infty)$ is defined as 
$$
R(\by,\bdelta) \coloneqq \sum_{\bu} r(\by,\bu) \, \prod_{i=1}^N \delta_i(u_i). 
$$ 
With these definitions, for a given policy $\bgamma$, its reward function is given by 
$$
J(\bgamma) = \sum_{\by} R(\by,\bgamma(\by)) \, \prod_{i=1}^N \pi_i(y_i),
$$
where $\bgamma(\by) \coloneqq \left(\gamma_1(y_1),\ldots,\gamma_N(y_N)\right)$. It is straightforward to prove that this model is equivalent to the static reduction of the dynamic team model. In the remainder of this paper, we consider this new model.

\subsection{Duality of Strong Convexity and Smoothness}\label{duality}

In this section, before we introduce regularized stochastic team problems, we review duality between strongly convex functions and smooth functions. In particular, we state a key result that will be used to establish the global optimality of the person-by-person optimal strategy and an algorithm for computing this optimal strategy.  

Suppose that $\sE = \R^d$ for some $d \geq 1$ with an inner product $\langle \cdot,\cdot \rangle$. We denote $\R^{*} = \R \, \bigcup \, \{\infty\}$. Let $f:\sE \rightarrow \R^{*}$ be a convex function with the domain $S \coloneqq \{x \in \sE: f(x) \in \R\}$, which is necessarily convex subset of $\sE$. The subdifferential $\partial f(x)$ of $f$ at any point $x \in \sE$ is defined as 
$$
\partial f(x) = \{y\in\sE: f(z) \geq f(x) + \langle y,z-x \rangle \,\, \forall z \in \sE\}.
$$
For any $x \in S$, $\partial f(x) \neq \emptyset$. 
The Fenchel conjugate of $f$ is a convex function $f^*:\sE \rightarrow \R^*$ that is defined as
$$
f^*(y) \coloneqq \sup_{x \in \S} \, \langle x,y \rangle - f(x).
$$

Now, we will state duality result between strong convexity and smoothness. To this end, we suppose that $f$ is $\rho$-strongly convex with respect to a norm $\|\cdot\|$ on $\sE$  (not necessarily Euclidean norm); that is, for all $x,y \in S$, we have 
$$
f(y) \geq f(x) + \langle z,y-x \rangle + \frac{1}{2} \rho \|y-x\|^2 \,\, \text{for all} \,\, z \in \partial f(x). 
$$
To state the result, we need to define the dual norm of $\|\cdot\|$. The dual norm $\|\cdot\|_*$ of $\|\cdot\|$ on $\sE$ is defined as 
$$
\|z\|_* \coloneqq \sup\{\langle z,x \rangle: \|x\| \leq 1\}.
$$
For example, $\|\cdot\|_{\infty}$ is dual norm of $\|\cdot\|_{1}$.  

\smallskip

\begin{proposition}[{\cite[Lemma 15]{Sha07}}]\label{duality}
Let $f:\sE \rightarrow \R^*$ be a $\rho$-strongly convex function with respect to the norm $\|\cdot\|$ and let $S$ denote its domain. Then
\begin{itemize}
\item[1.] $f^*$ is differentiable on $\sE$.
\item[2.] $\nabla f^*(y) = \argmax_{x \in S} \langle x,y \rangle - f(x)$. 
\item[3.] $f^*$ is $\frac{1}{\rho}$-smooth with respect to the norm $\|\cdot\|_*$ ; that is, 
$$
\|\nabla f^*(y_1) - \nabla f^*(y_2)\| \leq \frac{1}{\rho} \|y_1-y_2\|_* \,\, \text{for all} \,\, y_1,y_2 \in \sE.
$$
\end{itemize}
\end{proposition}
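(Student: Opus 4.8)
The whole statement is driven by a single object: for each $y\in\sE$, the maximizer in the definition of $f^*$. Note that $x\mapsto f(x)-\langle x,y\rangle$ is again $\rho$-strongly convex with respect to $\|\cdot\|$ (strong convexity is unaffected by adding a linear functional), so it has \emph{at most one} minimizer; assuming for the moment it has one, call it $x_y\coloneqq\argmax_{x\in S}\langle x,y\rangle-f(x)$, so that $f^*(y)=\langle x_y,y\rangle-f(x_y)$. Two elementary facts about $x_y$ will do everything. First, minimality of $x_y$ gives $f(z)-\langle z,y\rangle\ge f(x_y)-\langle x_y,y\rangle$ for all $z$, i.e. $y\in\partial f(x_y)$ (in the extended-valued sense of the definition recalled in the excerpt, with $f=+\infty$ off $S$). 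Second, for any $y'$ one has $f^*(y')\ge\langle x_y,y'\rangle-f(x_y)=f^*(y)+\langle x_y,y'-y\rangle$, i.e. $x_y\in\partial f^*(y)$; this is just the Fenchel--Young equality at $(x_y,y)$.

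I would prove item 3 first, since it is the engine. Fix $y_1,y_2$, abbreviate $x_i\coloneqq x_{y_i}$, and use $y_i\in\partial f(x_i)$ together with $\rho$-strong convexity to write
\[ f(x_2)\ge f(x_1)+\langle y_1,x_2-x_1\rangle+\frac{\rho}{2}\|x_1-x_2\|^2,\qquad f(x_1)\ge f(x_2)+\langle y_2,x_1-x_2\rangle+\frac{\rho}{2}\|x_1-x_2\|^2. \]
Adding these and cancelling $f(x_1)+f(x_2)$ yields $\rho\|x_1-x_2\|^2\le\langle y_1-y_2,x_1-x_2\rangle\le\|y_1-y_2\|_*\,\|x_1-x_2\|$, where the last step is the generalized Cauchy--Schwarz inequality $\langle z,x\rangle\le\|z\|_*\|x\|$ that is immediate from the definition of the dual norm. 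Dividing by $\|x_1-x_2\|$ (the case $x_1=x_2$ being trivial) gives $\|x_{y_1}-x_{y_2}\|\le\frac{1}{\rho}\|y_1-y_2\|_*$. In particular $y\mapsto x_y$ is (Lipschitz) continuous, which I will reuse immediately; once items 1--2 identify $\nabla f^*=x_{(\cdot)}$, this inequality is exactly the $\frac{1}{\rho}$-smoothness claimed in item 3.

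For items 1 and 2, I would show the one-sided directional derivatives of $f^*$ exist and are linear. Fix $y$ and a direction $d$. From $x_y\in\partial f^*(y)$ we get the lower bound $\frac{f^*(y+td)-f^*(y)}{t}\ge\langle x_y,d\rangle$ for $t>0$. For the matching upper bound, use that $x_{y+td}$ is feasible for the supremum defining $f^*(y)$: $f^*(y)\ge\langle x_{y+td},y\rangle-f(x_{y+td})$, while $f^*(y+td)=\langle x_{y+td},y+td\rangle-f(x_{y+td})$, and subtracting gives $\frac{f^*(y+td)-f^*(y)}{t}\le\langle x_{y+td},d\rangle$. Letting $t\downarrow0$ and invoking continuity of $y\mapsto x_y$ from the previous paragraph, the right-hand side tends to $\langle x_y,d\rangle$, so the directional derivative equals $\langle x_y,d\rangle$, which is linear in $d$. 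A convex function on $\R^d$ whose one-sided directional derivative at an interior point of its domain is linear is differentiable there, with gradient equal to that linear functional; hence $f^*$ is differentiable at $y$ and $\nabla f^*(y)=x_y=\argmax_{x\in S}\langle x,y\rangle-f(x)$, which is items 1 and 2.

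The step that needs care, and the only real obstacle, is the existence of the maximizer $x_y$ (and, relatedly, finiteness of $f^*$ so that every $y$ is interior to $\mathrm{dom}\,f^*$). Strong convexity forces $f$ to grow at least quadratically, hence $x\mapsto f(x)-\langle x,y\rangle$ is coercive on $S$ and $f^*(y)<\infty$ for every $y$, so $\mathrm{dom}\,f^*=\sE$; existence of the minimizer then follows once $f$ is lower semicontinuous with closed domain. In the setting of this paper this is automatic and clean: $S$ is the product of finitely many probability simplices $\prod_i\Delta_i$, which is compact, and the regularizers under consideration are continuous on it, so Weierstrass' theorem supplies $x_y$ directly and the remaining arguments above go through verbatim.
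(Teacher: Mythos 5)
The paper does not prove this proposition at all --- it is quoted verbatim from \cite[Lemma 15]{Sha07} and used as a black box --- so there is nothing internal to compare against; what you have written is a correct, self-contained proof of the cited result, and it follows the standard route (essentially the same one as in Shalev-Shwartz's thesis). The two key observations, that the maximizer $x_y$ satisfies both $y\in\partial f(x_y)$ and $x_y\in\partial f^*(y)$, are right, and your derivation of item 3 by adding the two strong-convexity inequalities at $x_{y_1},x_{y_2}$ and applying the generalized Cauchy--Schwarz inequality $\langle z,x\rangle\le\|z\|_*\|x\|$ is exactly the standard monotonicity argument; note that the resulting bound $\|x_{y_1}-x_{y_2}\|\le\tfrac1\rho\|y_1-y_2\|_*$ correctly has the primal norm on the left and the dual norm on the right, matching the statement. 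Your derivation of items 1 and 2 via one-sided directional derivatives (lower bound from $x_y\in\partial f^*(y)$, upper bound from feasibility of $x_{y+td}$, then continuity of $y\mapsto x_y$ from item 3) is also sound, using the standard fact that a finite convex function on $\R^d$ with a linear directional derivative at a point is differentiable there. You correctly isolated the one genuine hypothesis the bare statement sweeps under the rug --- existence of the maximizer requires $f$ to be closed (lower semicontinuous with closed domain) in addition to strongly convex --- and you correctly observed that in this paper's application $f=\Omega_j$ extended by $+\infty$ off the compact simplex $\Delta_j$, so Weierstrass supplies the maximizer and the caveat is harmless. No gaps.
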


In the sequel, we will make use of the properties $2.$ and $3.$ of Proposition~\ref{duality} to establish the Lipschitz continuity of the best-response strategies of agents, which enables us to prove the main results of our paper.

\section{Regularized Stochastic Team Model}\label{regularized}

In this section, we introduce regularized version of the stochastic team model in Section~\ref{equivalent}. Indeed, the only difference between that model and the regularized one is the reward function. The rest of the components and the definitions are the same. To define regularized reward function, for each Agent~$i$, let $\Omega_i: \Delta_i \rightarrow \R$ be a $\rho_i$-strongly convex function with respect to $l_1$-norm $\|\cdot\|_1$. Note that one can extend the definition of $\Omega_i$ to the whole $\R^{\sU_i}$ by setting $\Omega_i(\delta) = \infty$ if $\delta \in \R^{\sU_i} \setminus \Delta_i$. In this case, the domain of $\Omega_i$ becomes $\Delta_i$. In regularized stochastic team model, the reward function is given by
$$
R^{\reg}(\by,\bdelta) \coloneqq R(\by,\bdelta) - \sum_{i=1}^N \Omega_i(\delta_i). 
$$
Here, $\sum_{i=1}^N \Omega_i(\delta_i)$ is the regularization term. 

A canonical example for $\Omega_i$ is the negative of the entropy $\Omega_i(\delta) = \sum_{u \in \sU_i} \ln(\delta(u)) \, \delta(u)$. A similar example is the relative entropy between $\delta$ and uniform distribution; that is, $\Omega_i(\delta) = \sum_{u \in \sU_i} \ln(\delta(u)) \, \delta(u) + \ln(|\sU_i|)$. In both of these examples, as a result of entropy regularization, agents are enforced to apply optimal and as well as almost optimal actions randomly. This improves the exploration of the iterative algorithms for computing (almost) optimal policies, and so, pretends agents to be stuck with local optima. Moreover, due to strong convexity of $\Omega_i$ for each $i=1,\ldots,N$, Lipschitz sensitivity of the optimal policy of Agent~$i$ on observations of agents, policies of other agents, and other uncertain parameters can be established via duality between strong convexity and smoothness. This makes the problem robust to uncertainties in the model. Therefore, if there is a slight misspecification in the model, then, as a result of the regularization, the computed optimal policy remains close to the true optimal policy \cite[Remark 4.3]{KaYu20},\cite[Theorem 4.1]{KaYu19}. This is generally not the case in the original problem as the optimal policy is quite sensitive to the system components.    

In regularized stochastic teams, the reward of a strategy $\bgamma$ is given by 
$$
J^{\reg}(\bgamma) \coloneqq \sum_{\by} R^{\reg}(\by,\bgamma(\by)) \, \prod_{i=1}^N \pi_i(y_i).
$$
With this definition, we can now define notion of optimality.

\smallskip

\begin{definition}\label{optimalregularizedstrategy}
For a given stochastic team problem, a policy (strategy)
$\bgamma^*:=({\gamma_1}^*,\ldots, {\gamma_N}^*)\in {\bf \Gamma}$ is
an {\it optimal regularized team decision rule} if
\begin{equation}
J^{\reg}(\bgamma^*)=\sup_{\bgamma \in {\bGamma}}
J^{\reg}(\bgamma)=:J^{*,\reg}. \nonumber
\end{equation}
The reward level $J^{*,\reg}$ achieved by this strategy is the {\it optimal regularized value of the team}. A policy (strategy)
$\bgamma^*:=({\gamma_1}^*,\ldots, {\gamma_N}^*)\in {\bf \Gamma}$ is {\it regularized person-by-person optimal} if
\begin{equation}
J^{\reg}(\bgamma^*)=\sup_{\gamma_i \in \Gamma_i}
J^{\reg}(\bgamma^{*,-i},\gamma_i), \text{ } \text{for all} \,\, i=1,\ldots,N. \nonumber
\end{equation}
\end{definition}

In this paper, we first show that there exists an unique regularized person-by-person optimal policy. Since an optimal regularized team decision rule is also a regularized person-by-person optimal, this result implies that this unique regularized person-by-person optimal policy must be globally optimal for the regularized stochastic team problem if the optimal regularized policy exists. Then, we introduce an asynchronous iterative algorithm to compute this optimal policy using best-response maps. We establish that this algorithm converge to the optimal policy. 

\section{Best Response Operator}\label{bestresponse}

In this section, we introduce best response operator and establish that there exists an unique fixed point of this operator, where this unique fixed point is proved to be optimal regularized team decision rule. 

For each $j=1,\ldots,N$, we define
$$
\sup_{\by,\bu^{-j}} \left[\sup_{u_j} r(\by,\bu) - \inf_{u_j} r(\by,\bu)\right] \eqqcolon \lambda_j(r),
$$
where $\bu^{-j} \coloneqq (u_i)_{i\neq j}$. Here, $\lambda_j(r)$ gives the local oscillation of the function $r$ with respect to $u_j$. 

\smallskip

\begin{lemma}\label{localosc}
Given $\by$, we have the following bound
$$
|R(\by,\bdelta) - R(\by,\bxi)| \leq \sum_{i=1}^N \frac{\lambda_i(r)}{2} \, \|\delta_i-\xi_i\|_{1}.
$$
\end{lemma}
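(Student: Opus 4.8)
The plan is to prove the bound by a standard hybrid (telescoping) argument in which the coordinates of the distribution profile are switched from $\bxi$ to $\bdelta$ one agent at a time, reducing everything to a one-dimensional estimate that is controlled by the oscillation quantity $\lambda_k(r)$.

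First I would fix $\by$ and, for $k=0,1,\ldots,N$, define the hybrid profile $\bzeta^{(k)}$ whose first $k$ components are $\delta_1,\ldots,\delta_k$ and whose remaining components are $\xi_{k+1},\ldots,\xi_N$, so that $\bzeta^{(0)}=\bxi$ and $\bzeta^{(N)}=\bdelta$. Using the product form $R(\by,\bdelta)=\sum_{\bu} r(\by,\bu)\prod_{i=1}^N\delta_i(u_i)$ and the triangle inequality on the telescoping sum $R(\by,\bdelta)-R(\by,\bxi)=\sum_{k=1}^N\bigl(R(\by,\bzeta^{(k)})-R(\by,\bzeta^{(k-1)})\bigr)$, it suffices to bound each consecutive difference. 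In the $k$-th difference only the $k$-th coordinate changes, so after separating the sum over $u_k$ from the sum over $\bu^{-k}$ one gets
$$
R(\by,\bzeta^{(k)})-R(\by,\bzeta^{(k-1)})=\sum_{\bu^{-k}}\Bigl(\prod_{i<k}\delta_i(u_i)\Bigr)\Bigl(\prod_{i>k}\xi_i(u_i)\Bigr)\sum_{u_k}r(\by,\bu)\bigl(\delta_k(u_k)-\xi_k(u_k)\bigr).
$$

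The key step—and the only nonroutine one—is the inner one-dimensional estimate. Since $\delta_k$ and $\xi_k$ are both probability distributions on $\sU_k$, the difference $\delta_k-\xi_k$ sums to zero, so for any constant $c$ (depending on $\by,\bu^{-k}$ but not on $u_k$) we may replace $r(\by,\bu)$ by $r(\by,\bu)-c$. Choosing $c=\tfrac12\bigl(\sup_{u_k}r(\by,\bu)+\inf_{u_k}r(\by,\bu)\bigr)$ makes $|r(\by,\bu)-c|\le\tfrac12\bigl(\sup_{u_k}r-\inf_{u_k}r\bigr)\le\tfrac{\lambda_k(r)}{2}$, whence $\bigl|\sum_{u_k}r(\by,\bu)(\delta_k(u_k)-\xi_k(u_k))\bigr|\le\tfrac{\lambda_k(r)}{2}\|\delta_k-\xi_k\|_1$ by definition of the $l_1$-norm.

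Finally I would plug this into the displayed identity: the factor $\tfrac{\lambda_k(r)}{2}\|\delta_k-\xi_k\|_1$ comes out of the sum over $\bu^{-k}$, and the remaining sum $\sum_{\bu^{-k}}\bigl(\prod_{i<k}\delta_i(u_i)\bigr)\bigl(\prod_{i>k}\xi_i(u_i)\bigr)$ equals $1$ because it is a product of total masses of probability distributions. Thus $|R(\by,\bzeta^{(k)})-R(\by,\bzeta^{(k-1)})|\le\tfrac{\lambda_k(r)}{2}\|\delta_k-\xi_k\|_1$, and summing over $k$ yields the claim. I expect no real obstacle beyond spotting the "center $r$ using that $\delta_k-\xi_k$ has zero mean" trick; the rest is bookkeeping with product measures.
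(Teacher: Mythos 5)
Your proof is correct and follows essentially the same route as the paper: a telescoping/hybrid decomposition over the agents combined with the one-dimensional bound $|\sum_u F(u)(\mu(u)-\nu(u))|\le\frac{\lambda(F)}{2}\|\mu-\nu\|_1$. The only difference is that you prove this last estimate yourself via the centering trick, whereas the paper cites it as a known fact from Georgii and applies it after marginalizing over $\bu^{-k}$ rather than pointwise; both orderings are equivalent here.
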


\begin{proof}
The proof is given in Appendix~\ref{app1}.
\end{proof}

Note that one can view $\Gamma_i = \{\gamma_i:\sY \rightarrow \Delta_i\} = \Delta_i^{\sY_i}$ as a subset of $\R^{\sU_i\times\sY_i}$ since $\Delta_i \subset \R^{\sU_i}$. Let $\|\cdot\|_{L_1}$ denote the $L_1$-norm on $\R^{\sU_i\times\sY_i}$ (do not confuse this with $l_1$ norm $\|\cdot\|_1$ on $\R^{\sU_i}$) defined as 
$$
\|\gamma_i\|_{L_1} = \sum_{y_i,u_i} |\gamma_i(u_i|y_i)| \, \pi_i(y_i) = \sum_{y_i} \|\gamma_i(\cdot|y_i)\|_1 \pi_i(y_i). 
$$
As a subset of $\R^{\sU_i\times\sY_i}$, $\Gamma_i$ is convex and closed with respect to $L_1$-norm.

To define best response operator $\rB: \bGamma \rightarrow \bGamma$, for any $j=1,\ldots,N$, we first define $\rB_j: \bGamma^{-j} \rightarrow \Gamma_j$ as follows
\begin{align}
&\rB_{j}(\bgamma^{-j})(y_j) \nonumber \\
&= \argmax_{\delta_j \in \Delta_j} E\left[R^{\reg}({\bf Y},\bgamma^{-j}({\bf Y}^{-j}),\delta_j) \bigg | Y_j = y_j \right] \nonumber \\
&= \argmax_{\delta_j \in \Delta_j} \sum_{\by^{-j}} R^{\reg}(\by,\bgamma^{-j}(\by^{-j}),\delta_j) \prod_{i \neq j} \pi_i(y_i) \nonumber \\ 
&\overset{(I)}{=} \argmax_{\delta_j \in \Delta_j} \sum_{\by^{-j}} R(\by,\bgamma^{-j}(\by^{-j}),\delta_j) \prod_{i \neq j} \pi_i(y_i) - \Omega_j(\delta_j), \label{eq2}
\end{align}
where $(I)$ follows from the fact that $\Omega_i$ does not depend on $y_j$ and $\delta_j$ if $i \neq j$. Note that since $R(\by,\bdelta)$ is a multi-linear function of $\bdelta$ and $\Omega_i$ is a strongly convex function of $\delta_j$, the set in the right side of (\ref{eq2}) is a singleton. Hence, $\rB_j$ is well-defined; that is, it is a single-valued function (not a multi-valued function). 

\begin{lemma}\label{bestlemma}
For each $j=1,\ldots,N$, given any $\bgamma^{-j} \in \bGamma^{-j}$, we have 
$$\argmax_{\psi_j \in \Gamma_j} J^{\reg}(\bgamma^{-j},\psi_j) = \rB_j(\bgamma^{-j}).$$  
\end{lemma}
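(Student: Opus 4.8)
The plan is to exploit the product structure of the reference measure $\prod_{i=1}^{N}\pi_i$ on the observation space, which decouples the maximization over $\psi_j\in\Gamma_j$ into independent scalar maximizations, one for each value $y_j\in\sY_j$. First I would fix $\bgamma^{-j}\in\bGamma^{-j}$ and, for an arbitrary $\psi_j\in\Gamma_j$, expand
\[
J^{\reg}(\bgamma^{-j},\psi_j)=\sum_{\by}R^{\reg}\bigl(\by,\bgamma^{-j}(\by^{-j}),\psi_j(y_j)\bigr)\prod_{i=1}^{N}\pi_i(y_i)=\sum_{y_j}\pi_j(y_j)\,G_{y_j}\bigl(\psi_j(y_j)\bigr),
\]
where $G_{y_j}(\delta_j)\coloneqq\sum_{\by^{-j}}R^{\reg}(\by,\bgamma^{-j}(\by^{-j}),\delta_j)\prod_{i\neq j}\pi_i(y_i)$ is exactly the objective appearing inside the $\argmax$ in \eqref{eq2}.

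Next I would observe that $\Gamma_j=\Delta_j^{\sY_j}$ is a Cartesian product, and that in the display above the $y_j$-th summand depends on $\psi_j$ only through the coordinate $\psi_j(y_j)\in\Delta_j$; moreover $\pi_j(y_j)>0$ for every $y_j$ since $\pi_j$ is uniform on the finite set $\sY_j$. Hence a policy $\psi_j^{\star}$ maximizes $J^{\reg}(\bgamma^{-j},\cdot)$ over $\Gamma_j$ if and only if, for each $y_j$, the value $\psi_j^{\star}(y_j)$ maximizes $G_{y_j}$ over $\Delta_j$. By \eqref{eq2} this pointwise maximizer is precisely $\rB_j(\bgamma^{-j})(y_j)$, and it is unique because $G_{y_j}$ is the sum of a multilinear function of $\delta_j$ and the strictly concave function $-\Omega_j$, as already noted after \eqref{eq2}. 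Since $\sY_j$ is finite, the resulting map $y_j\mapsto\rB_j(\bgamma^{-j})(y_j)$ is automatically measurable, hence an admissible element of $\Gamma_j$, so one concludes $\argmax_{\psi_j\in\Gamma_j}J^{\reg}(\bgamma^{-j},\psi_j)=\rB_j(\bgamma^{-j})$.

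I do not anticipate a genuine obstacle here: the one point requiring care is the coordinatewise-optimization step, i.e.\ justifying that maximizing a positively weighted separable sum over a product set reduces to maximizing each summand over its own factor. This is immediate once one checks that $\pi_j(y_j)>0$ and that nothing couples the values $\psi_j(y_j)$ across distinct $y_j$. The uniqueness of each pointwise maximizer, needed so that the conclusion is an equality of singletons matching the notation $\rB_j(\bgamma^{-j})$, has already been established in the paragraph preceding the lemma and may be invoked directly.
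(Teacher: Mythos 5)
Your proof is correct and follows essentially the same route as the paper: both arguments rest on the strict positivity of the uniform weights $\pi_j(y_j)$ and the product structure of $\Gamma_j=\Delta_j^{\sY_j}$ to reduce the maximization over policies to independent pointwise maximizations over $\Delta_j$, and then invoke the uniqueness of each pointwise maximizer established just before the lemma. Your version merely makes the separable decomposition $J^{\reg}(\bgamma^{-j},\psi_j)=\sum_{y_j}\pi_j(y_j)\,G_{y_j}(\psi_j(y_j))$ explicit where the paper argues by contradiction (``otherwise we can construct a better policy''), which is a cosmetic rather than substantive difference.
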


\begin{proof}
It is straightforward to prove that $\rB_j(\bgamma^{-j}) \in \argmax_{\psi_j \in \Gamma_j} J^{\reg}(\bgamma^{-j},\psi_j)$.
Let us suppose that $\xi_j \in \argmax_{\psi_j \in \Gamma_j} J^{\reg}(\bgamma^{-j},\psi_j)$. Since $\pi_j(y_j) = \frac{1}{|\sY_j|} > 0$ for all $y_j$, we must have  
$$
\xi_j(y_j) \in \argmax_{\delta_j \in \Delta_j} E\left[R^{\reg}({\bf Y},\bgamma^{-j}({\bf Y}^{-j}),\delta_j) \bigg | Y_j = y_j \right]. 
$$
Otherwise, we can construct another policy for Agent~$j$ which performs better than $\xi_j$ given $\bgamma^{-j}$.
But since $\argmax_{\delta_j \in \Delta_j} E\left[R^{\reg}({\bf Y},\bgamma^{-j}({\bf Y}^{-j}),\delta_j) \bigg | Y_j = y_j \right]$ is singleton and equal to $\rB_j(\bgamma^{-j})(y_j)$, we have $\xi_j = \rB_j(\bgamma^{-j})$, which completes the proof. 
\end{proof}

Now, we can define the best response operator $\rB: \bGamma \rightarrow \bGamma$ as follows:
$$
\rB(\bgamma) \coloneqq \left(\rB_1(\bgamma^{-1}),\ldots,\rB_N(\bgamma^{-N})\right).
$$
The following result is the key to prove the main results of this paper. In its proof, we will make use of the properties $2.$ and $3.$ of Proposition~\ref{duality}. 

\smallskip

\begin{proposition}\label{key}
Given any $\bgamma, \bxi \in \bGamma$, for each $j=1,\ldots,N$, we have 
$$
\|\rB_j(\bgamma^{-j})-\rB_j(\bxi^{-j})\|_{L_1} \leq \frac{1}{\rho_j} \sum_{i \neq j} \frac{\lambda_i(r)}{2} \, \|\gamma_i-\xi_i\|_{L_1}.
$$
\end{proposition}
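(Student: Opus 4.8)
The plan is to exploit the variational characterization in \eqref{eq2} together with Proposition~\ref{duality}. Fix $j$ and fix the observation $y_j$. Define, for each $\bgamma^{-j}\in\bGamma^{-j}$, the linear functional
$$
g_{y_j}(\bgamma^{-j})(\delta_j) \coloneqq \sum_{\by^{-j}} R(\by,\bgamma^{-j}(\by^{-j}),\delta_j)\prod_{i\neq j}\pi_i(y_i),
$$
which is linear in $\delta_j$ (since $R$ is multilinear), hence of the form $\langle c(\bgamma^{-j},y_j),\delta_j\rangle$ for a coefficient vector $c(\bgamma^{-j},y_j)\in\R^{\sU_j}$. By \eqref{eq2}, $\rB_j(\bgamma^{-j})(y_j)=\argmax_{\delta_j\in\Delta_j}\langle c(\bgamma^{-j},y_j),\delta_j\rangle-\Omega_j(\delta_j)$, which by part~$2.$ of Proposition~\ref{duality} (applied to the $\rho_j$-strongly convex $\Omega_j$ with domain $\Delta_j$) equals $\nabla\Omega_j^*\bigl(c(\bgamma^{-j},y_j)\bigr)$.

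The next step is to control $\|c(\bgamma^{-j},y_j)-c(\bxi^{-j},y_j)\|_\infty$, the dual-norm distance between the two coefficient vectors. Since $c(\bgamma^{-j},y_j)(u_j)$ is obtained by evaluating $g_{y_j}$ at the vertex $e_{u_j}$, we have $c(\bgamma^{-j},y_j)(u_j)-c(\bxi^{-j},y_j)(u_j)=\sum_{\by^{-j}}\bigl[R(\by,\bgamma^{-j}(\by^{-j}),e_{u_j})-R(\by,\bxi^{-j}(\by^{-j}),e_{u_j})\bigr]\prod_{i\neq j}\pi_i(y_i)$. Applying Lemma~\ref{localosc} to each summand — with only the coordinates $i\neq j$ differing — bounds the absolute value of the bracket by $\sum_{i\neq j}\frac{\lambda_i(r)}{2}\|\gamma_i(y_i)-\xi_i(y_i)\|_1$, uniformly in $u_j$. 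Summing against $\prod_{i\neq j}\pi_i(y_i)$ and using that $\prod_{k\neq j,i}\pi_k$ sums to $1$ collapses the multi-sum, giving
$$
\|c(\bgamma^{-j},y_j)-c(\bxi^{-j},y_j)\|_\infty \leq \sum_{i\neq j}\frac{\lambda_i(r)}{2}\sum_{y_i}\|\gamma_i(y_i)-\xi_i(y_i)\|_1\,\pi_i(y_i) = \sum_{i\neq j}\frac{\lambda_i(r)}{2}\,\|\gamma_i-\xi_i\|_{L_1}.
$$
Crucially the right-hand side does not depend on $y_j$.

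Now apply part~$3.$ of Proposition~\ref{duality}: $\Omega_j^*$ is $\tfrac1{\rho_j}$-smooth with respect to the dual norm $\|\cdot\|_\infty$ of $\|\cdot\|_1$, so
$$
\bigl\|\rB_j(\bgamma^{-j})(y_j)-\rB_j(\bxi^{-j})(y_j)\bigr\|_1 = \bigl\|\nabla\Omega_j^*(c(\bgamma^{-j},y_j))-\nabla\Omega_j^*(c(\bxi^{-j},y_j))\bigr\|_1 \leq \frac{1}{\rho_j}\,\|c(\bgamma^{-j},y_j)-c(\bxi^{-j},y_j)\|_\infty.
$$
Finally, take the $\pi_j$-weighted sum over $y_j$ to pass from the fiberwise $l_1$-bound to the $L_1$-bound: since the preceding estimate holds with a $y_j$-independent right-hand side, $\|\rB_j(\bgamma^{-j})-\rB_j(\bxi^{-j})\|_{L_1}=\sum_{y_j}\|\rB_j(\bgamma^{-j})(y_j)-\rB_j(\bxi^{-j})(y_j)\|_1\,\pi_j(y_j)\leq\frac1{\rho_j}\sum_{i\neq j}\frac{\lambda_i(r)}{2}\|\gamma_i-\xi_i\|_{L_1}$, which is the claim.

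The routine parts are the multilinearity/vertex-evaluation bookkeeping and the telescoping of the uniform measures; the step that needs the most care is identifying the correct normed-space setup so that Proposition~\ref{duality} applies cleanly — namely recognizing that the relevant norm on $\Delta_j\subset\R^{\sU_j}$ is $\|\cdot\|_1$ with dual $\|\cdot\|_\infty$, that $\Omega_j$ (extended by $+\infty$) is $\rho_j$-strongly convex on all of $\R^{\sU_j}$ with domain $\Delta_j$, and that the argmax in \eqref{eq2} is exactly $\nabla\Omega_j^*$ evaluated at the linear-coefficient vector. Once that dictionary is in place, the Lipschitz estimate of Lemma~\ref{localosc} feeds directly into the smoothness bound and the result follows.
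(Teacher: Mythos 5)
Your proposal is correct and follows essentially the same route as the paper's proof: your coefficient vector $c(\bgamma^{-j},y_j)$ is exactly the paper's $G_j(\bgamma^{-j},y_j,\cdot)$, the vertex evaluation at $e_{u_j}$ matches the paper's use of the point mass $\delta_{u_j}$, and both arguments combine property 2 and property 3 of Proposition~\ref{duality} with Lemma~\ref{localosc} before averaging over $y_j$. The only cosmetic difference is the order of steps (you bound the dual-norm gap of the coefficient vectors before invoking smoothness, the paper does it afterwards), which changes nothing.
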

\begin{proof}
Fix any $j=1,\ldots,N$. Let $\bgamma \in \bGamma$. Define $$
G_j(\bgamma^{-j},y_j,u_j) \coloneqq \sum_{\by^{-j},\bu^{-j}} r(\by,\bu) \, \prod_{i \neq j} \gamma_i(u_i|y_i) \, \pi_i(y_i). 
$$
Then, we have 
\begin{align}
&\rB_j(\bgamma^{-j})(y_j) \nonumber \\
&= \argmax_{\delta_j \in \Delta_j} \sum_{\by^{-j}} R(\by,\bgamma^{-j}(\by^{-j}),\delta_j) \prod_{i \neq j} \pi_i(y_i) - \Omega_j(\delta_j) \nonumber \\ 
&= \argmax_{\delta_j \in \Delta_j} \, \left \langle G_j(\bgamma^{-j},y_j,\cdot),\delta_j \right\rangle - \Omega_j(\delta_j).\nonumber 
\end{align}
By the property $2.$ of Proposition~\ref{duality}, we have $$
\rB_j(\bgamma^{-j})(y_j) = \nabla \Omega_j^*(G_j(\bgamma^{-j},y_j,\cdot)). 
$$
Moreover, if $\bgamma, \bxi \in \bGamma$, by property $3.$ of Proposition~\ref{duality} and by noting the fact that the $l_{\infty}$ norm $\|\cdot\|_{\infty}$ is dual norm of the $l_1$ norm $\|\cdot\|_1$ on $\Delta_j$, we obtain the following bounds:
\begin{align}
&\|\rB_j(\bgamma^{-j})(y_j)-\rB_j(\bxi^{-j})(y_j)\|_{1} \nonumber \\
&= \|\nabla \Omega_j^*(G_j(\bgamma^{-j},y_j,\cdot)) - \nabla \Omega_j^*(G_j(\bxi^{-j},y_j,\cdot)) \|_{1}  \nonumber \\
&\leq \frac{1}{\rho_j} \, \|G_j(\bgamma^{-j},y_j,\cdot)-G_j(\bxi^{-j},y_j,\cdot)\|_{\infty} \nonumber \\
&\phantom{xxxxxxxxxxxxxxxxx}\text{ } \text{(by property $3.$ of Proposition~\ref{duality})} \nonumber \\
&= \frac{1}{\rho_j} \, \sup_{u_j} |G_j(\bgamma^{-j},y_j,u_j)-G_j(\bxi^{-j},y_j,u_j)| \nonumber \\
&= \frac{1}{\rho_j} \, \sup_{u_j} \bigg|\sum_{\by^{-j},\bu^{-j}} r(\by,\bu) \, \prod_{i \neq j} \gamma_i(u_i|y_i) \, \pi_i(y_i) \nonumber \\
&\phantom{xxxxxxxxxxxxxx} - \sum_{\by^{-j},\bu^{-j}} r(\by,\bu) \, \prod_{i \neq j} \xi_i(u_i|y_i) \, \pi_i(y_i)\bigg| \nonumber \\
&= \frac{1}{\rho_j} \, \sup_{u_j} \bigg|\sum_{\by^{-j}} \big( R(\by,\bgamma^{-j}(\by^{-j}),\delta_{u_j}) \nonumber \\
&\phantom{xxxxxxxxxxxxxxxx}- R(\by,\bxi^{-j}(\by^{-j}),\delta_{u_j}) \big) \prod_{i \neq j} \pi_i(y_i) \bigg| \nonumber \\
&\leq \frac{1}{\rho_j} \, \sum_{\by^{-j}} \prod_{i \neq j} \pi_i(y_i) \left(\sum_{i \neq j} \frac{\lambda_i(r)}{2} \, \|\gamma_i(y_i)-\xi_i(y_i)\|_{1}\right) \nonumber \\
&\phantom{xxxxxxxxxxxxxxxxxxxxxxxxxxxxx}\text{ } \text{(by Lemma~\ref{localosc})} \nonumber \\
&= \frac{1}{\rho_j} \, \sum_{i \neq j} \sum_{y_i} \pi_i(y_i) \, \frac{\lambda_i(r)}{2} \|\gamma_i(y_i)-\xi_i(y_i)\|_{1} \nonumber \\
&= \frac{1}{\rho_j} \, \sum_{i \neq j} \frac{\lambda_i(r)}{2} \, \|\gamma_i-\xi_i\|_{L_1}. \nonumber
\end{align}
Hence, we have 
\begin{align}
\|\rB_j(\bgamma^{-j})&-\rB_j(\bxi^{-j})\|_{L_1} \nonumber \\
& = \sum_{y_j} \|\rB_j(\bgamma^{-j})(y_j)-\rB_j(\bxi^{-j})(y_j)\|_{1} \, \pi_j(y_j) \nonumber \\
&\leq \frac{1}{\rho_j} \sum_{i \neq j} \frac{\lambda_i(r)}{2} \, \|\gamma_i-\xi_i\|_{L_1}.\nonumber \end{align}
This completes the proof.
\end{proof}

Let us define the non-negative $N \times N$ matrix $P$ as follows:
\begin{align}
P(j,i) &= \frac{1}{\rho_j} \cdot \frac{\lambda_i(r)}{2} \,\, \text{for} \,\, i \neq j, \nonumber \\
P(j,j) &= 0 \,\, \text{for} \,\, j=1,\ldots,N. \nonumber 
\end{align}
We assume the following condition.

\smallskip

\begin{assumption}\label{as1}
The spectral radius $\alpha(P)$ of $P$ is strictly less than $1$.  
\end{assumption}

For any $\bgamma \in \bGamma$, we let $\|\bgamma\| \coloneqq \left(\|\gamma_1\|_{L_1},\ldots,\|\gamma_N\|_{L_1}\right)^T$; that is, $\|\bgamma\|$ is a column vector that consists of $L_1$ norms of $(\gamma_1,\ldots,\gamma_N)$. By Proposition~\ref{key}, we have
$$
\|\rB(\bgamma)-\rB(\bxi)\| \leq P \cdot \|\bgamma-\bxi\| \,\, \text{for any} \,\, \bgamma,\bxi \in \bGamma. 
$$
Namely, $\rB$ is a $P$-contraction on $\bGamma$ \cite{OrRh00}. The following is the first main result of this paper. 

\smallskip

\begin{theorem}\label{mainthm1}
Suppose that Assumption~\ref{as1} holds. Define iterates $\bgamma^{k+1} = \rB(\bgamma^k)$. Then $\lim_{k \rightarrow \infty} \|\bgamma^k - \bgamma^*\| = 0$, where $\bgamma^*$ is the unique fixed point of $\rB$. Moreover, $\bgamma^*$ is the optimal regularized team decision rule.
\end{theorem}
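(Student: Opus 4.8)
\emph{Proof proposal.} The plan is to treat $\rB$ as a vector‑valued ($P$‑)contraction and invoke the associated fixed‑point theorem, and then, separately, to link fixed points of $\rB$ first to regularized person‑by‑person optimality and then to global optimality. First I would record that, as a product of the sets $\Gamma_i=\Delta_i^{\sY_i}$ (each a finite product of probability simplices, since $\sY_i$ and $\sU_i$ are finite), $\bGamma$ is a \emph{compact} — hence complete — subset of the finite‑dimensional normed space $\prod_{i=1}^N\R^{\sU_i\times\sY_i}$. By Proposition~\ref{key} we have the componentwise estimate $\|\rB(\bgamma)-\rB(\bxi)\|\le P\,\|\bgamma-\bxi\|$, so $\rB$ is a $P$‑contraction.

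Since $P\ge 0$ and, by Assumption~\ref{as1}, $\alpha(P)<1$, I would pass to an equivalent norm via Perron--Frobenius theory: for small $\eps>0$ the strictly positive matrix $P_\eps=P+\eps\,\mathbf{1}\mathbf{1}^{\!\top}$ still has $\alpha(P_\eps)<1$ (by continuity of the spectral radius), hence a strictly positive eigenvector $v$ with $Pv\le P_\eps v=\alpha(P_\eps)\,v=:\theta v$, $\theta<1$. Equipping $\bGamma$ with the equivalent norm $\|\bgamma\|_v:=\max_j\|\gamma_j\|_{L_1}/v_j$, the componentwise bound upgrades to $\|\rB(\bgamma)-\rB(\bxi)\|_v\le\theta\,\|\bgamma-\bxi\|_v$, i.e.\ $\rB$ is an honest contraction on a complete metric space. (Alternatively one invokes the $P$‑contraction fixed‑point theorem of \cite{OrRh00} directly, using $\sum_k P^k=(I-P)^{-1}<\infty$ to show the Picard iterates are Cauchy.) The Banach fixed‑point theorem then yields a unique fixed point $\bgamma^*$ and geometric convergence $\|\bgamma^k-\bgamma^*\|_v\to 0$; since all norms on a finite‑dimensional space are equivalent, $\|\bgamma^k-\bgamma^*\|\to 0$ (the vector of $L_1$‑norms tends to $0$).

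Next I would identify the fixed points of $\rB$ with the regularized person‑by‑person optimal policies. By definition $\rB(\bgamma^*)=\bgamma^*$ means $\gamma_j^*=\rB_j(\bgamma^{*,-j})$ for every $j$, which by Lemma~\ref{bestlemma} is exactly $J^{\reg}(\bgamma^{*,-j},\gamma_j^*)=\max_{\psi_j\in\Gamma_j}J^{\reg}(\bgamma^{*,-j},\psi_j)$ for all $j$, i.e.\ $\bgamma^*$ is regularized person‑by‑person optimal in the sense of Definition~\ref{optimalregularizedstrategy}; conversely, by the same lemma, any regularized person‑by‑person optimal policy is a fixed point of $\rB$. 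Hence $\bgamma^*$ is the \emph{unique} regularized person‑by‑person optimal policy. To promote this to global optimality I would note that an optimal regularized team decision rule exists: $J^{\reg}$ is upper semicontinuous on the compact set $\bGamma$ ($R$ is continuous, being multilinear on a compact set, and each $\Omega_i$ is lower semicontinuous, as for the canonical entropic regularizers), so $J^{*,\reg}$ is attained by some $\bgamma^{\#}$; any maximizer is in particular a coordinatewise maximizer, hence regularized person‑by‑person optimal, hence $\bgamma^{\#}=\bgamma^*$ by the uniqueness just shown.

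\textbf{The main obstacle} is this last step: the regularized reward $J^{\reg}$ is \emph{not} jointly concave in $\bgamma$ (it is only multilinear in the actions), so person‑by‑person optimality does not by itself imply global optimality — the argument genuinely relies on having \emph{uniqueness} of the person‑by‑person optimal point, together with compactness and upper semicontinuity, to force the a priori possibly many global maximizers to collapse onto the single fixed point of $\rB$. A secondary technical point requiring care is making the passage from the componentwise $P$‑contraction estimate of Proposition~\ref{key} to a genuine contraction rigorous, which is precisely where the Perron--Frobenius change of norm (or the $P$‑contraction theorem of \cite{OrRh00}) is needed.
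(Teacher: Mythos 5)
Your argument is correct and follows the same overall logic as the paper: establish that $\rB$ is a $P$-contraction via Proposition~\ref{key}, deduce a unique fixed point and convergence of the Picard iterates, observe that every global maximizer of $J^{\reg}$ is regularized person-by-person optimal and hence (by Lemma~\ref{bestlemma}) a fixed point of $\rB$, and conclude by uniqueness. The two places where you deviate are both minor but worth noting. First, for the contraction step the paper reproduces the $P$-contraction fixed-point argument of \cite{OrRh00} directly, using $\sum_k P^k=(I-P)^{-1}$ and $P^k\to 0$ to show the iterates are Cauchy; your Perron--Frobenius change of norm $\|\cdot\|_v$ is an equivalent and equally standard device (and is in fact the one the paper itself uses later, in the proof of Theorem~\ref{mainthm2}, via \cite[Lemma, p.~231]{Bau78}). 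Second, for existence of an optimal regularized team decision rule the paper simply cites \cite[Theorem 5.2]{YukselWitsenStandardArXiv}, whereas you supply a self-contained Weierstrass argument on the compact set $\bGamma$. That argument needs one caveat: upper semicontinuity of $J^{\reg}$ requires each $\Omega_i$ to be lower semicontinuous on $\Delta_i$, and this does \emph{not} follow from strong convexity alone --- a convex function finite on a compact convex set can jump up at relative-boundary points (e.g.\ $f\equiv 0$ on the interior with a larger value at a vertex). It does hold for the entropic regularizers the paper highlights, and you flag this, but strictly speaking it is an additional (mild) regularity hypothesis on $\Omega_i$ that neither your proposal nor the paper's assumption list makes explicit; the paper sidesteps the issue by outsourcing existence to the cited theorem. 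With that caveat acknowledged, your proof is complete.
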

\begin{proof}
The proof of the first part can be done as in the proof of \cite[13.1.2, p. 433]{OrRh00}. But for the sake of completeness, we give the full proof of this result here. 
The proof is almost the same as Banach Fixed Point Theorem if you note the following facts about matrix $P$ under Assumption~\ref{as1}:
$$
(I-P)^{-1} = \sum_{i=0}^{\infty} P^i \geq 0, \, \sum_{i=0}^{k} P^i \leq (I-P)^{-1} ,  \lim_{k\rightarrow\infty} P^k = 0. 
$$
For any $k,m\geq0$, we have 
\begin{align}
\|\bgamma^{k+m}-\bgamma^{k}\|  &\leq \sum_{j=1}^m \|\bgamma^{k+j}-\bgamma^{k+j-1}\|  \nonumber \\
&\leq \sum_{j=1}^m P^j \, \|\bgamma^{k}-\bgamma^{k-1}\| \nonumber \\
&\leq (I-P)^{-1} \, P \, \|\bgamma^{k}-\bgamma^{k-1}\| \nonumber \\
&\leq (I-P)^{-1} \, P^k \, \|\bgamma^{1}-\bgamma^{0}\|. \nonumber
\end{align}
Since $\lim_{k\rightarrow\infty} P^k = 0$, $\{\|\bgamma^k\|\}_{k\geq1}$ is a Cauchy sequence, and so, converges to some $\bgamma^* \in \bGamma$ as $\bGamma$ is closed. Note that 
\begin{align}
\|\bgamma^*-\rB(\bgamma^*)\| &\leq \|\bgamma^*-\bgamma^{k+1}\| + \|\rB(\bgamma^{k})-\rB(\bgamma^*)\| \nonumber \\
&\leq \|\bgamma^*-\bgamma^{k+1}\| + P \, \|\bgamma^{k}-\bgamma^*\|. \nonumber
\end{align}
Since the terms in the last expression converge to zero, we have $\bgamma^*=\rB(\bgamma^*)$. Let $\bxi^*$ be another fixed point of $\rB$ in $\bGamma$. Then
$$
\|\bgamma^*-\bxi^*\| = \|\rB(\bgamma^*)-\rB(\bxi^*)\| \leq P \, \|\bgamma^*-\bxi^*\|.  
$$
Hence, $(I-P) \, \|\bgamma^*-\bxi^*\| \leq 0$. Since $(I-P)^{-1} \geq 0$, we must have $\bgamma^*=\bxi^*$. This completes the proof of the first part.

To show the global optimality of $\bgamma^*$, note that any globally optimal policy should be a fixed point of $\rB$. Otherwise, by Lemma~\ref{bestlemma}, we can construct a better policy using best response maps $\{\rB_j\}$. Since $\rB$ has an unique fixed point $\bgamma^*$, this unique fixed point should be optimal regularized team decision rule if it exists. Existence of optimal regularized team decision rule follows from \cite[Theorem 5.2]{YukselWitsenStandardArXiv}. 
\end{proof}

The next result is a corollary of Theorem~\ref{mainthm1}. It shows how the regularized optimal team decision rule $\gamma^*$ in Theorem~\ref{mainthm1} performs in the original stochastic team problem.  

\smallskip

\begin{corollary}\label{cor1}
For each $j=1,\ldots,N$, define 
$$
\beta_j \coloneqq \sup_{\delta_j \in \Delta_j} \Omega_j(\delta_j) - \inf_{\delta_j \in \Delta_j} \Omega_j(\delta_j). 
$$
Then, we have 
$$
J^* - \sum_{j=1}^N \beta_j \leq J(\bgamma^*) \leq J^*, 
$$
where $\bgamma^*$ is the optimal regularized team decision rule.
\end{corollary}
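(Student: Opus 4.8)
The plan is to compare $J^{\reg}$ and $J$ directly, exploiting that the regularization term decouples across agents once the independent observations are integrated out. First I would observe that, since each $\pi_k$ is a probability distribution on $\sY_k$, for any $\bgamma \in \bGamma$
\begin{align}
J^{\reg}(\bgamma) &= \sum_{\by} \Bigl( R(\by,\bgamma(\by)) - \sum_{i=1}^N \Omega_i(\gamma_i(y_i)) \Bigr) \prod_{k=1}^N \pi_k(y_k) \nonumber \\
&= J(\bgamma) - \sum_{i=1}^N \sum_{y_i} \Omega_i(\gamma_i(y_i)) \, \pi_i(y_i), \nonumber
\end{align}
where the factorization $\prod_{k \neq i} \sum_{y_k} \pi_k(y_k) = 1$ was used. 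Bounding each $\Omega_i(\gamma_i(y_i))$ between $\inf_{\delta_i \in \Delta_i} \Omega_i(\delta_i)$ and $\sup_{\delta_i \in \Delta_i} \Omega_i(\delta_i)$ and summing against $\pi_i$ then yields, for every $\bgamma \in \bGamma$,
$$
J(\bgamma) - \sum_{i=1}^N \sup_{\delta_i \in \Delta_i} \Omega_i(\delta_i) \;\leq\; J^{\reg}(\bgamma) \;\leq\; J(\bgamma) - \sum_{i=1}^N \inf_{\delta_i \in \Delta_i} \Omega_i(\delta_i).
$$

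For the upper bound in the statement, $J(\bgamma^*) \leq \sup_{\bgamma \in \bGamma} J(\bgamma) = J^*$ is immediate. For the lower bound, I would first take the supremum over $\bgamma$ in the left-hand inequality above to get $J^{*,\reg} \geq J^* - \sum_{i=1}^N \sup_{\delta_i} \Omega_i(\delta_i)$; it is important here that the subtracted quantity does not depend on $\bgamma$, so no existence claim for an unregularized optimizer is required. Then, applying the right-hand inequality at $\bgamma = \bgamma^*$ and using $J^{\reg}(\bgamma^*) = J^{*,\reg}$,
\begin{align}
J(\bgamma^*) &\geq J^{\reg}(\bgamma^*) + \sum_{i=1}^N \inf_{\delta_i} \Omega_i(\delta_i) \nonumber \\
&\geq J^* - \sum_{i=1}^N \sup_{\delta_i} \Omega_i(\delta_i) + \sum_{i=1}^N \inf_{\delta_i} \Omega_i(\delta_i) = J^* - \sum_{i=1}^N \beta_i, \nonumber
\end{align}
which is the claimed bound.

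There is no genuine obstacle here; the only points that need a little care are the decoupling identity (checking that the marginals $\pi_k$, $k \neq i$, sum to one after integrating out) and the order of operations in the lower bound, namely taking the supremum over all policies \emph{before} specializing to $\bgamma^*$ rather than attempting to relate $\bgamma^*$ to an unregularized team-optimal policy. One could alternatively invoke existence of an unregularized optimal policy (the map $\bgamma \mapsto J(\bgamma)$ is continuous and $\bGamma$ is compact), but that is unnecessary for this argument.
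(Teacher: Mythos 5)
Your proof is correct and follows essentially the same route as the paper: both arguments rest on the two-sided bound $J(\bgamma) - \sum_i \sup_{\delta_i}\Omega_i(\delta_i) \leq J^{\reg}(\bgamma) \leq J(\bgamma) - \sum_i \inf_{\delta_i}\Omega_i(\delta_i)$, take the supremum over $\bgamma$ on one side, and evaluate the other side at the regularized optimizer $\bgamma^*$. You merely make explicit the decoupling identity and the order of quantifiers that the paper's three-line chain leaves implicit; no substantive difference.
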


\begin{proof}
The second inequality is obvious. For the first inequality, note that 
\begin{align}
J^* &\leq J^{\reg}(\bgamma^*) + \sum_{j=1}^N \sup_{\delta_j \in \Delta_j} \Omega_j(\delta_j)  \nonumber \\  
&\leq J(\bgamma^*) + \sum_{j=1}^N \sup_{\delta_j \in \Delta_j} \Omega_j(\delta_j) - \sum_{j=1}^N \inf_{\delta_j \in \Delta_j} \Omega_j(\delta_j) \nonumber \\
&= J(\bgamma^*) + \sum_{j=1}^N \beta_j. \nonumber 
\end{align}
This completes the proof.
\end{proof}

Note that as a result of Corollary~\ref{cor1}, the solution of the regularized team problem provides an upper bound and a lower bound to the original team problem, which can be used to analyze the performance of certain numerical algorithms developed in the literature.

\subsubsection{Example 1}\label{example1}

In this example, we consider two-agent stochastic team problem and derive a necessary condition for Assumption~\ref{as1}. In two agent case, the matrix $P$ is in the following form
\begin{align}
\begin{pmatrix}
0 & \frac{1}{\rho_1} \cdot \frac{\lambda_2(r)}{2} \\
\frac{1}{\rho_2} \cdot \frac{\lambda_1(r)}{2} & 0 
\end{pmatrix} \nonumber 
\end{align}
Hence, the spectral radius of $P$ is
$$\alpha(P) = \sqrt{\frac{\lambda_2(r)\lambda_1(r)}{4\rho_2 \rho_1}}.$$
Therefore, if $\frac{\lambda_2(r)\lambda_1(r)}{4\rho_2 \rho_1} < 1$, then the results of this paper hold for two-agent case. 

Note that we can achieve the same result if we define the following operator $T:\Gamma_2 \rightarrow \Gamma_2$ as 
$$
T(\gamma_2) = \rB_2(\rB_1(\gamma_2)). 
$$
Indeed, if $\gamma_2,\xi_2 \in \Gamma_2$, then by Proposition~\ref{key}
\begin{align}
\|T(\gamma_2)-T(\xi_2)\|_{L_1} &\leq \frac{1}{\rho_2} \cdot \frac{\lambda_1(r)}{2} \|\rB_1(\gamma_2)-\rB_1(\xi_2)\|_{L_1}  \nonumber \\
&\leq \frac{1}{\rho_2} \cdot \frac{\lambda_1(r)}{2} \frac{1}{\rho_1} \cdot \frac{\lambda_2(r)}{2} \|\gamma_2-\xi_2\|_{L_1}  \nonumber \\
&= \frac{\lambda_2(r)\lambda_1(r)}{4\rho_2 \rho_1} \|\gamma_2-\xi_2\|_{L_1}. \nonumber 
\end{align}
Hence, $T$ is contraction if $\frac{\lambda_2(r)\lambda_1(r)}{4\rho_2 \rho_1} < 1$. Suppose that this is the case. Let $\gamma_2^*$ be the unique fixed point of $T$ and let $\gamma_1^* = \rB_1(\gamma_2^*)$. Then, it is straightforward to prove that $(\gamma_2^*,\gamma_1^*)$ is the unique solution in Theorem~\ref{mainthm1}. This is indeed the approach developed in \cite{Bas87} to prove the uniqueness of Nash equilibrium in two person game problems, which subsumes two person team problems.. 

\subsubsection{Example 2}\label{example2}

In this example, we consider $N$-agent model with circular dependent reward function $r$; that is, $r$ can be decomposed as follows:
\begin{align}
&r(\by,\bu) = r_1(y_1,y_2,u_1,u_2)+r_2(y_2,y_3,u_2,u_3) \nonumber \\
&\phantom{xxxxxxxxxxxxxxxxxxxx}+\cdots+r_N(y_N,y_1,u_N,u_1). \nonumber 
\end{align}
This is indeed the case if the original reward function $p$ is of the following form
\begin{align}
&p(x,\by,\bu) = p_1(x,y_1,y_2,u_1,u_2)+p_2(x,y_2,y_3,u_2,u_3) \nonumber \\
&\phantom{xxxxxxxxxxxxxxxxxxxx}+\cdots+p_N(x,y_N,y_1,u_N,u_1). \nonumber 
\end{align}
Note that, in this case, for any $j=1,\ldots,N-1$, $\rB_j$ is only a function of $\gamma_{j+1}$, and $\rB_N$ is only a function of $\gamma_1$. Therefore, the matrix $P$ is in the following form:
\scriptsize
\begin{align}
\begin{pmatrix}
0 & \frac{1}{\rho_1} \cdot \frac{\lambda_2(r)}{2} & 0 & 0 & \ldots & 0 \\
0 & 0 & \frac{1}{\rho_2} \cdot \frac{\lambda_3(r)}{2} &  0 & \vdots & 0 \\
 \vdots & \vdots & 0 & \ddots &  \vdots & \vdots \\
 0 & \vdots & \vdots & \vdots & \ddots & \vdots \\
0 & \vdots & \vdots & \vdots & 0 & \frac{1}{\rho_{N-1}} \cdot \frac{\lambda_N(r)}{2}\\
\frac{1}{\rho_{N}} \cdot \frac{\lambda_1(r)}{2} & 0 & 0 & \ldots & 0 & 0
\end{pmatrix} \nonumber 
\end{align}
\normalsize
Hence, the spectral radius of $P$ is
$$\alpha(P) = \sqrt[N]{\frac{\lambda_N(r)\ldots\lambda_1(r)}{2^N \rho_N \ldots \rho_1}}.$$
This implies that if $\frac{\lambda_N(r)\ldots\lambda_1(r)}{2^N \rho_N \ldots \rho_1}< 1$, then results of this paper hold for circular dependent case. 

As in Example~\ref{example1}, we can achieve the same result if we define the following operator $T:\Gamma_N \rightarrow \Gamma_N$ as 
$$
T(\gamma_N) = \rB_{N}(\rB_{N-1}(\ldots(\rB_1(\gamma_N)))). $$
Then, if $\gamma_N,\xi_N \in \Gamma_N$, it is straightforward to show that 
\begin{align}
\|T(\gamma_N)-T(\xi_N)\|_{L_1} \leq \frac{\lambda_N(r)\ldots\lambda_1(r)}{2^N \rho_N \ldots \rho_1} \|\gamma_N-\xi_N\|_{L_1}. \nonumber 
\end{align}
Hence, $T$ is contraction if $\frac{\lambda_N(r)\ldots\lambda_1(r)}{2^N \rho_N \ldots \rho_1} < 1$. Suppose that this is the case. Let $\gamma_N^*$ be the unique fixed point of $T$ and define recursively $\gamma_1^* =\rB_1(\gamma_N^*), \ldots, \gamma_{N-1}^* =\rB_{N-1}(\gamma_{N-2}^*)$. Then, it is straightforward to show that $(\gamma_N^*,\ldots,\gamma_1^*)$ is the unique solution in Theorem~\ref{mainthm1}. This approach was again first developed in \cite{Bas87} to prove the uniqueness of Nash equilibrium in $N$ person game problems with circular dependent reward functions.

\section{Asynchronous Iterative Algorithm}\label{asyiteration}

In this section, we propose an asynchronous iterative algorithm for computing the optimal team decision rule $\gamma^*$ and prove its convergence. This algorithm was first introduced in \cite{Ber83} to find fixed points of vector-valued functions. A similar asynchronous iterative algorithm was introduced in \cite{LiBa87} to compute Nash equilibrium in games; that is, the objectives of agents are different.  

In this algorithm, at each iteration, Agent~$j$ can be in one of three possible states $\{\textit{compute},\textit{transmit},\textit{idle}\}$. In the compute state, Agent~$j$ computes a new policy $\gamma_j$ using $\rB_j$ and available policies of other agents stored in its memory. In the transmit state, Agent~$j$ sends its latest policy to one or more agents. In the idle state, Agent~$j$ does nothing. It is assumed that an agent can receive transmission from other agents while computing or transmitting. We have the following assumption about the timing of this algorithm.

\smallskip

\begin{assumption}\label{as2}
For each Agent~$j$ and iteration time $t$, there exists $t'>t$ such that between $t$ and $t'$, Agent~$j$ should do at least one computation and should do transmission to every other agent. 
\end{assumption}

In this algorithm, at each iteration time $t$, every Agent~$j$ stores $N$-tuple of policies $\bgamma^{(t)}(j) = (\gamma_1^{(t)}(j),\ldots,\gamma_N^{(t)}(j))$ in its memory, where $\gamma_i^{(t)}(j)$ is the latest transmission from Agent~$i$ to Agent~$j$ and $\gamma_j^{(t)}(j)$ is Agent~$j$'s latest own policy estimate. Note that memory contents of different agents at the same time can be different. This is also true for initial time; that is, at the initialization of the algorithm, agents can use different $N$-tuple of policies. 

The rules according to which the memory contents are updated as follows:
\begin{itemize}
\item[(1)] If $t$ is a transmission time from Agent~$i$ to Agent~$j$, the policy $\gamma_i^{(t-1)}(i)$ is sent to Agent~$j$ and Agent~$j$ updates its $i^{th}$ policy as follows $\gamma_i^{(t)}(j)=\gamma_i^{(t-1)}(i)$.
\item[(2)] If $t$ is a computation time for Agent~$i$, Agent~$i$ updates its own policy using $\rB_i$; that is, $\gamma_i^{(t)}(i) = \rB_i\left(\gamma_1^{(t-1)}(i),\ldots,\gamma_{i-1}^{(t-1)}(i),\gamma_{i+1}^{(t-1)}(i),\ldots,\gamma_{N}^{(t-1)}(i)\right).$
\end{itemize}

Now, we can state our second main result, which is about convergence of the above asynchronous iterative algorithm for computing optimal regularized team decision rule $\bgamma^*$. 

\smallskip

\begin{theorem}\label{mainthm2}
Suppose that Assumption~\ref{as1} and Assumption~\ref{as2} hold. For $j=1,\ldots,N$, let $\bgamma^{(0)}(j)$ be the initial memory content of Agent~$j$. For each $t$, let $\bgamma^{(t)}$ be defined as $\gamma_j^{(t)} = \gamma_j^{(t)}(j)$. Then the iterative asynchronous algorithm converges to the unique fixed point of $\rB$; that is
$$
\lim_{t\rightarrow\infty} \|\bgamma^{(t)}-\bgamma^*\|=0.
$$
\end{theorem}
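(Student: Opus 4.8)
The plan is to derive Theorem~\ref{mainthm2} from the asynchronous convergence theorem of Bertsekas \cite{Ber83}: it suffices to exhibit a nested family of ``boxes'' $X^0 \supseteq X^1 \supseteq \cdots$ in $\bGamma$, each a Cartesian product of subsets of the $\Gamma_j$, such that $\rB(X^k) \subseteq X^{k+1}$, $\bigcap_k X^k = \{\bgamma^*\}$, and all initial memories lie in $X^0$; Assumption~\ref{as2} then supplies exactly the ``total asynchronism'' hypothesis that theorem needs. \textbf{Step 1 (a weighted sup-norm contracting under $\rB$).} Under Assumption~\ref{as1} we have $(I-P)^{-1} = \sum_{i\ge 0}P^i \ge 0$; fix any strictly positive $v \in \R^N$ and set $w := (I-P)^{-1}v$, so that $w > 0$ componentwise and $Pw = w-v \le \beta w$ with $\beta := 1 - \min_j v_j/w_j \in [0,1)$. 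For $\bgamma \in \bGamma$ put $\|\bgamma-\bgamma^*\|_w := \max_j \|\gamma_j-\gamma_j^*\|_{L_1}/w_j$. Applying the $P$-contraction with $\bxi = \bgamma^*$ (using $\rB(\bgamma^*) = \bgamma^*$ from Theorem~\ref{mainthm1}) and then $Pw \le \beta w$ gives, for each $j$,
\[
\|\rB_j(\bgamma^{-j})-\gamma_j^*\|_{L_1} \le (Pw)_j\,\|\bgamma-\bgamma^*\|_w \le \beta\, w_j\,\|\bgamma-\bgamma^*\|_w,
\]
hence $\|\rB(\bgamma)-\bgamma^*\|_w \le \beta\,\|\bgamma-\bgamma^*\|_w$.

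\textbf{Step 2 (the boxes).} There are finitely many agents, so $D_0 := \max_j \|\bgamma^{(0)}(j)-\bgamma^*\|_w < \infty$. For $k \ge 0$ define
\[
X^k := \prod_{j=1}^N X^k_j, \qquad X^k_j := \bigl\{\gamma_j \in \Gamma_j : \|\gamma_j-\gamma_j^*\|_{L_1} \le w_j\,\beta^k D_0\bigr\}.
\]
Then each $X^k$ is a product set; $X^{k+1}\subseteq X^k$ since $\beta<1$; Step 1 gives $\rB(X^k)\subseteq X^{k+1}$, and because $\rB_j(\bgamma^{-j})$ depends only on $\bgamma^{-j}$ this is compatible with the coordinatewise structure; $\bigcap_k X^k = \{\bgamma^*\}$ since $\beta^k D_0 \downarrow 0$; and every component ever stored initially lies in the corresponding $X^0_j$ by the choice of $D_0$.

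\textbf{Step 3 (asynchronous induction).} I would show by induction on $k$ that there is a time $t_k$ with $\bgamma^{(t)}(j) \in X^k$ for every agent $j$ and all $t \ge t_k$; letting $t\to\infty$ and invoking $\bigcap_k X^k = \{\bgamma^*\}$ then yields $\gamma_j^{(t)} = \gamma_j^{(t)}(j) \to \gamma_j^*$ for every $j$. The base case $k=0$ ($t_0=0$) follows by induction on $t$: a transmission update merely copies an already-stored value, and a computation update outputs $\rB_i$ of contents in $X^0$, which lies in $X^1_i \subseteq X^0_i$. For the inductive step, suppose all memories lie in $X^k$ from time $t_k$ on. Using Assumption~\ref{as2} twice: first, within a finite time each Agent~$i$ performs a computation, so $\gamma_i^{(t)}(i) = \rB_i(\cdot) \in X^{k+1}_i$ from then on (all later computations also use $X^k$-contents); second, within a further finite time each such updated estimate is transmitted to every other agent, so every stored copy $\gamma_i^{(t)}(j)$ — which is always some earlier value $\gamma_i^{(s)}(i)$ — lies in $X^{k+1}_i$. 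Choosing $t_{k+1}$ past both times closes the induction.

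\textbf{Main obstacle.} The only real work is the bookkeeping in Step~3: one must track the ``age'' of each stored policy (noting that $\gamma_i^{(t)}(j)$ for $j\ne i$ is a delayed copy of a past own-estimate $\gamma_i^{(s)}(i)$, $s<t$), use the nesting $X^{k+1}\subseteq X^k$ to argue that stale information is harmless, and use Assumption~\ref{as2} to argue that it is eventually flushed. This is precisely the two-phase argument underlying Bertsekas's asynchronous convergence theorem; everything specific to the present setting has been absorbed into Steps~1--2, which manufacture the required nested boxes from the $P$-contraction property of $\rB$.
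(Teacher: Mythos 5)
Your proposal is correct and follows essentially the same route as the paper's proof: both construct geometrically shrinking nested product sets $\{\bgamma:\|\bgamma-\bgamma^*\|\le \beta^k D_0\, w\}$ from the $P$-contraction property of $\rB$ together with a positive vector $w$ satisfying $Pw\le\beta w$, and then invoke the asynchronous convergence argument of Bertsekas \cite{Ber83} under Assumption~\ref{as2}. The only differences are cosmetic: you build the Perron-type vector explicitly via $(I-P)^{-1}v$ where the paper cites Baudet's lemma, and you sketch the two-phase asynchronous induction where the paper cites \cite[Proposition, p.~114]{Ber83} after verifying its hypotheses (a)--(d).
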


\begin{proof}
Note that there exists $\alpha>0$ such that $\|\bgamma^{(0)}(j) - \bgamma^*\| \leq \alpha$ for all $j=1,\ldots,N$. Since the spectral radius $\alpha(P)$ of $P$ is less than $1$, by \cite[Lemma, p. 231]{Bau78}, there exists a positive vector $v \in \R^N$ and positive constant $w<1$ such that $P \cdot v \leq w \cdot v$. For each $k=0,1,2,\ldots$, let us define 
$$
\bGamma^k \coloneqq \{\bgamma \in \bGamma: \|\bgamma - \bgamma^*\| \leq \alpha w^k \cdot v\}. 
$$
We prove that the sequence of subsets 
$\{\bGamma^k\}_{k\geq0}$ of $\bGamma$ satisfies the following properties:
\begin{itemize}
\item[(a)] If $\{\bgamma^{(k)}\}$ is a sequence in $\bGamma$ such that $\bgamma^{(k)} \in \bGamma^k$ for all $k$, then 
$$
\lim_{k\rightarrow\infty} \|\bgamma^{(k)}-\bgamma^*\| = 0. 
$$
\item[(b)] For all $k=0,1,\ldots$ and $j=1,\ldots,N$, if $\bgamma \in \bGamma^k$, then $(\bgamma^{-j},\rB_j(\bgamma^{-j})) \in \bGamma^k$.
\item[(c)] For all $k=0,1,\ldots$ and $j=1,\ldots,N$, if $\bgamma, \bxi \in \bGamma^k$, then $(\bgamma^{-j},\xi_j) \in \bGamma^k$.
\item[(d)] For all $k=0,1,\ldots$, if $\bgamma(1) \in \bGamma^k,\ldots,\bgamma(N) \in \bGamma^k$, then $$\left(\rB_1(\bgamma^{-1}(1)),\ldots,\rB_N(\bgamma^{-N}(N))\right) \in \bGamma^k.$$ 
\end{itemize}
The proof then follows from \cite[Proposition, p. 114]{Ber83}.

To show property (a), let $\{\bgamma^{(k)}\}$ be a sequence in $\bGamma$ such that $\bgamma^{(k)} \in \bGamma^k$ for all $k$; that is,
$$
\|\bgamma^{(k)} - \bgamma^*\| \leq \alpha w^k \cdot v. 
$$
Since $w < 1$, $\lim_{k\rightarrow\infty} \|\bgamma^{(k)}-\bgamma^*\| = 0$. Hence property (a) holds. For property (b), let $\bgamma \in \bGamma^k$; that is, $\|\bgamma - \bgamma^*\| \leq \alpha w^k \cdot v$. Then we have 
\begin{align}
&\|\rB(\bgamma)-\bgamma^*\| = \|\rB(\bgamma)-\rB(\bgamma^*)\| \leq P \cdot \|\bgamma-\bgamma^*\| \nonumber \\
&\leq P \cdot \alpha w^k \cdot v \leq \alpha w^{k+1} \cdot v \nonumber 
\end{align}
Hence $\|\rB_j(\bgamma^{-j})-\gamma_j^*\|_{L_1} \leq \alpha w^{k+1} v_j \leq \alpha w^{k} v_j$ for any $j=1,\ldots,N$. This implies that $(\bgamma^{-j},\rB_j(\bgamma^{-j})) \in \bGamma^k$. Therefore, property (b) holds. Property (c) is straightforward to prove, and so, we omit the details. To show property (d), let $\bgamma(1) \in \bGamma^k,\ldots,\bgamma(N) \in \bGamma^k$. Then, for any $j=1,\ldots,N$, we have  
\begin{align}
&\|\rB(\bgamma(j))-\bgamma^*\| = \|\rB(\bgamma(j))-\rB(\bgamma^*)\| \leq P \cdot \|\bgamma(j)-\bgamma^*\| \nonumber \\
&\leq P \cdot \alpha w^k \cdot v \leq \alpha w^{k+1} \cdot v \nonumber 
\end{align}
Hence $\|\rB_j(\bgamma^{-j}(j))-\gamma_j^*\|_{L_1} \leq \alpha w^{k+1} v_j \leq \alpha w^{k} v_j$ for any $j=1,\ldots,N$. In other words, $$\|\left(\rB_1(\bgamma^{-1}(1)),\ldots,\rB_N(\bgamma^{-N}(N))\right)-\bgamma^*\| \leq \alpha w^k \cdot v.$$  
This implies that $$\left(\rB_1(\bgamma^{-1}(1)),\ldots,\rB_N(\bgamma^{-N}(N))\right) \in \bGamma^k.$$
Hence, property (d) holds. This completes the proof in view of \cite[Proposition, p. 114]{Ber83}. 
\end{proof}

\section{Extension to Continuous Observation Spaces}

Since the main motivation of the paper is to compute the optimal regularized team decision rule, we therefore assume that the observation spaces are finite. However, one can do the same analysis for the stochastic teams with Borel observation spaces $\{\sY_i, i=1,\ldots,N\}$ under the following absolute continuity conditions on the observation channels:
\smallskip

\begin{itemize}
\item[\textbf{(AC)}] For each $i=1,\ldots,N$, there exists a probability measure $\pi_i$ on $\sY_i$ such that $W_i(dy_i|x,\bu^{^{(1:i-1)}})$ is absolutely continuous with respect to $\pi_i$ for any $(x,\bu^{^{(1:i-1)}})$, and the corresponding density function is $f_i(y_i,x,\bu^{^{(1:i-1)}})$. 
\end{itemize}

\smallskip

Under condition \textbf{(AC)}, one can reduce the dynamic stochastic team problem to the independent static one by incorporating the $f_i$ terms into the original reward function $p$. That is, for a fixed choice of $\bgamma$, the reward function $J(\bgamma)$ can be written as

\small
\begin{align}
&J(\bgamma) \nonumber \\
&= \int \sum_{\bu} p(x,\by,\bu)  \left( \prod_{i=1}^N \gamma_i(u_i|y_i) f_i(y_i,x,\bu^{^{(1:i-1)}}) \pi_i(dy_i)  \right)  \hspace{-3pt} \rP(dx) \nonumber \\
&= \int \, \sum_{\bu} r(\by,\bu)\,  \prod_{i=1}^N   \gamma_i(u_i|y_i) \, \pi_i(dy_i), \nonumber
\end{align}
where
\begin{align}
r(\by,\bu) &\coloneqq \int_{\sX} \biggl[ p(x,\by,\bu) \prod_{i=1}^N  f_i(y_i,x,\bu^{^{(1:i-1)}}) \biggr] \rP(dx) \nonumber
\end{align} 
\normalsize
is the new reward function. Now, the observations can be regarded as independent. In this case, any policy $\gamma_i: \sY_i \rightarrow \Delta_i$ of Agent~$i$ has the following $L_1$ norm:
$$
\|\gamma_i\|_{L_1} \coloneqq \int_{\sY_i} \|\gamma_i(\cdot|y_i)\|_1 \, \pi_i(dy_i), 
$$
which is very similar to the $L_1$-norm in the finite observation setting. The following theorem is the main result of this section. 

\begin{theorem}\label{auxthm}
Suppose that observation spaces $\{\sY_i, i=1,\ldots,N\}$ are Borel and \textbf{(AC)} holds. Suppose also that Assumption~\ref{as1} is true. Define iterates $\bgamma^{k+1} = \rB(\bgamma^k)$. Then $\lim_{k \rightarrow \infty} \|\bgamma^k - \bgamma^*\| = 0$, where $\bgamma^*$ is the unique fixed point of $\rB$. Moreover, $\bgamma^*$ is the optimal regularized team decision rule.

Suppose further that Assumption~\ref{as2} holds. For $j=1,\ldots,N$, let $\bgamma^{(0)}(j)$ be the initial memory content of Agent~$j$. For each $t$, let $\bgamma^{(t)}$ be defined as $\gamma_j^{(t)} = \gamma_j^{(t)}(j)$. Then the iterative asynchronous algorithm converges to the unique fixed point of $\rB$; that is
$$
\lim_{t\rightarrow\infty} \|\bgamma^{(t)}-\bgamma^*\|=0.
$$
\end{theorem}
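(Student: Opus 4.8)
The plan is to re-run the proofs of Theorems~\ref{mainthm1} and \ref{mainthm2} essentially verbatim, after checking that the three ingredients on which they rely survive the replacement of finite sums over $\sY_i$ by integrals against $\pi_i$: (i) the best-response maps $\rB_j$ are still well defined, as measurable $\Delta_j$-valued functions on $\sY_j$, so that $\rB$ maps $\bGamma$ into itself; (ii) $\bGamma=\prod_{i=1}^N\Gamma_i$, equipped with $\|\gamma_i\|_{L_1}=\int_{\sY_i}\|\gamma_i(\cdot|y_i)\|_1\,\pi_i(dy_i)$, is a complete metric space; and (iii) $\rB$ is a $P$-contraction with the same matrix $P$ as in the finite case.

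For (ii) I would note that each $\Gamma_i$ embeds into the Banach space $L_1(\sY_i,\pi_i;\R^{\sU_i})$, and, since $\Delta_i$ is a closed convex subset of $\R^{\sU_i}$, the set of $\pi_i$-equivalence classes of measurable $\Delta_i$-valued maps is closed and convex there; hence $\Gamma_i$, and therefore the product $\bGamma$, is a complete metric space under the $L_1$ norm. For (i), observe that, exactly as in \eqref{eq2}, one still has $\rB_j(\bgamma^{-j})(y_j)=\nabla\Omega_j^*\big(G_j(\bgamma^{-j},y_j,\cdot)\big)$, where now
$$
G_j(\bgamma^{-j},y_j,u_j)=\int\sum_{\bu^{-j}}r(\by,\bu)\prod_{i\neq j}\gamma_i(u_i|y_i)\,\pi_i(dy_i).
$$
Under a mild integrability hypothesis on $r$ — for instance boundedness, which was automatic in the finite-observation setting — this integral is finite, and by Tonelli's theorem, using joint measurability of $r$ and of the $\gamma_i$, it is measurable in $y_j$; since $\nabla\Omega_j^*$ is Lipschitz, hence continuous, by part~3 of Proposition~\ref{duality}, the composition $y_j\mapsto\rB_j(\bgamma^{-j})(y_j)$ is then a measurable $\Delta_j$-valued map, i.e.\ an element of $\Gamma_j$. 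Lemma~\ref{localosc} is pointwise in $\by$ and so carries over verbatim.

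For (iii), the computation in the proof of Proposition~\ref{key} goes through line for line with $\sum_{\by^{-j}}\prod_{i\neq j}\pi_i(y_i)$ replaced by $\int\prod_{i\neq j}\pi_i(dy_i)$: the pointwise duality estimate $\|\rB_j(\bgamma^{-j})(y_j)-\rB_j(\bxi^{-j})(y_j)\|_1\le\frac1{\rho_j}\|G_j(\bgamma^{-j},y_j,\cdot)-G_j(\bxi^{-j},y_j,\cdot)\|_\infty$, combined with Lemma~\ref{localosc} and Tonelli's theorem, again yields $\|\rB_j(\bgamma^{-j})-\rB_j(\bxi^{-j})\|_{L_1}\le\frac1{\rho_j}\sum_{i\neq j}\frac{\lambda_i(r)}{2}\|\gamma_i-\xi_i\|_{L_1}$, so that $\|\rB(\bgamma)-\rB(\bxi)\|\le P\cdot\|\bgamma-\bxi\|$. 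Granting (i)--(iii), everything else is formal: under Assumption~\ref{as1} the $P$-contraction argument reproduced in the proof of Theorem~\ref{mainthm1} uses only completeness of $\bGamma$, so $\bgamma^{k+1}=\rB(\bgamma^k)$ converges to the unique fixed point $\bgamma^*$ of $\rB$; global optimality follows as there, via the Borel analogue of Lemma~\ref{bestlemma} (if $\xi_j$ is optimal against $\bgamma^{-j}$ then $\xi_j(y_j)$ must attain the $y_j$-wise maximum for $\pi_j$-a.e.\ $y_j$, and that maximizer is the singleton $\rB_j(\bgamma^{-j})(y_j)$, whence $\xi_j=\rB_j(\bgamma^{-j})$ in $L_1$) together with an existence result for optimal regularized team decision rules over Borel observation spaces as in the finite case from \cite[Theorem 5.2]{YukselWitsenStandardArXiv}. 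Finally, the proof of Theorem~\ref{mainthm2} invokes only the $P$-contraction bound, completeness of $\bGamma$, and properties (a)--(d) of the nested sets $\bGamma^k$ — all of which depend solely on the contraction estimate and the linear structure of $\bGamma$ and are insensitive to whether the $\sY_i$ are finite or Borel — so that argument transfers verbatim to give convergence of the asynchronous algorithm.

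The step I expect to be the main obstacle is the bookkeeping in (i): checking that $\rB_j(\bgamma^{-j})$ is a genuine measurable selection taking values in $\Delta_j$ for $\pi_j$-a.e.\ $y_j$, which rests on joint measurability of $r$ and on the (mild but now genuinely needed) integrability hypothesis guaranteeing that $G_j$ is finite-valued and the iterates have finite regularized reward — conditions the finite-observation setting supplied for free. Everything downstream is a mechanical substitution of integrals for finite sums.
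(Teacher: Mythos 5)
Your proposal is correct and follows exactly the route the paper takes: the paper's own proof of Theorem~\ref{auxthm} simply states that the analyses of Sections~\ref{bestresponse} and~\ref{asyiteration} apply verbatim and omits all details. Your write-up is in fact more careful than the paper's, since you explicitly verify the completeness of $\bGamma$ under the integral $L_1$ norm and flag the measurability of the selection $y_j \mapsto \rB_j(\bgamma^{-j})(y_j)$ (via continuity of $\nabla \Omega_j^*$) as the one point that genuinely needs checking in the Borel setting.
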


\begin{proof}
We can apply exactly the same analyses as in Section~\ref{bestresponse} and Section~\ref{asyiteration} to prove the theorem. Therefore, we omit the details. 
\end{proof}

Note that we cannot extend Theorem~\ref{mainthm1} and Theorem~\ref{mainthm2} to the stochastic teams with continuous action spaces. Indeed, if we suppose that action spaces $\{\sU_i, i=1,\ldots,N\}$ are Borel, then  $\Delta_i$'s (sets of probability distributions on $\sU_i$'s) become infinite dimensional. In this case, to obtain similar results, one needs to extend duality of strong convexity and smoothness (i.e., Proposition~\ref{duality}) to the functions defined on infinite dimensional spaces $\Delta_i$, which is a highly non-trivial result to prove. This is indeed a future research direction to pursue.

\section{Conclusion}\label{conc}

In this paper, we introduced regularized stochastic team problems. We established that the best response operator has an unique fixed point and this unique fixed point is the optimal regularized team decision rule. Then, we introduced an asynchronous iterative algorithm for the computation of this unique fixed point.  

One interesting future direction is to study regularized stochastic team problems with abstract observation and action spaces. In this case, to obtain similar results, one needs to extend duality of strong convexity and smoothness to the functions defined on infinite dimensional spaces such as the set of probability measures on abstract spaces.

\section*{Appendix}

\subsection{Proof of Lemma~\ref{localosc}}\label{app1}

Given $\by$, we have 
\begin{align}
&|R(\by,\bdelta)-R(\by,\bxi)| \nonumber \\
&= \left| \sum_{\bu} r(\by,\bu) \prod_{i=1}^N \delta_i(u_i) -  \sum_{\bu} r(\by,\bu) \prod_{i=1}^N \xi_i(u_i) \right| \nonumber \\  &\leq \sum_{j=1}^N \bigg| \sum_{\bu} r(\by,\bu) \prod_{i=1}^{N-j+1} \delta_i(u_i) \, \prod_{k=1}^{j-1} \xi_{N-j+1+k}(u_{N-j+1+k}) \nonumber \\
&\phantom{xxxxxxx}-\sum_{\bu} r(\by,\bu) \prod_{i=1}^{N-j} \delta_i(u_i) \, \prod_{k=1}^{j} \xi_{N-j+k}(u_{N-j+k}) \bigg| \nonumber \\
&= \sum_{j=1}^N \left| \sum_{u} F_j(\by,u) \, \delta_{N-j+1}(u) - \sum_{u} F_j(\by,u) \, \xi_{N-j+1}(u) \right|, \label{eq1}
\end{align}
where
\begin{align}
&F_j(\by,u_{N-j+1}) \nonumber \\
&\coloneqq \sum_{\bu^{-(N-j+1)}} r(\by,\bu) \prod_{i=1}^{N-j} \delta_i(u_i) \, \prod_{k=1}^{j-1} \xi_{N-j+k}(u_{N-j+1+k}). \nonumber 
\end{align}
Note that 
$$
\sup_{\by} \left[ \sup_{u} F_j(\by,u) - \inf_{u} F_j(\by,u) \right] \leq \lambda_{N-j+1}(r), 
$$
for all $j=1,\ldots,N$. 

Before we conclude the proof, let us recall the following fact about $l_1$ norm on the set probability distributions on finite sets \cite[p. 141]{Geo11}. Suppose that we have a real valued function $F$ on a finite set $\sU$. Let $\lambda(F) \coloneqq \sup_{u \in \sU} F(u) - \inf_{u \in \sU} F(u)$. Then, for any pair of probability distributions $\mu,\nu$ on $\sU$, we have 
$$
\left|\sum_{u} F(u) \, \mu(u) - \sum_{u} F(u) \, \nu(u) \right| \leq \frac{\lambda(F)}{2} \, \|\mu-\nu\|_{1}.
$$
Using this fact, we have
\begin{align}
(\ref{eq1}) &\leq \sum_{j=1}^N \frac{\lambda_{N-j+1}}{2} \, \|\delta_{N-j+1}-\xi_{N-j+1}\|_{1} \nonumber \\
&=\sum_{i=1}^N \frac{\lambda_i(r)}{2} \, \|\delta_i-\xi_i\|_{1}. \nonumber 
\end{align}
This completes the proof.

\section{Acknowledgements}

The author is grateful to Professor Serdar Y\"{u}ksel and Tamer Ba\c{s}ar for their constructive comments.

%\bibliographystyle{IEEEtran}
%\bibliography{references,SerdarBibliography}

\begin{thebibliography}{10}
\providecommand{\url}[1]{#1}
\csname url@samestyle\endcsname
\providecommand{\newblock}{\relax}
\providecommand{\bibinfo}[2]{#2}
\providecommand{\BIBentrySTDinterwordspacing}{\spaceskip=0pt\relax}
\providecommand{\BIBentryALTinterwordstretchfactor}{4}
\providecommand{\BIBentryALTinterwordspacing}{\spaceskip=\fontdimen2\font plus
\BIBentryALTinterwordstretchfactor\fontdimen3\font minus
  \fontdimen4\font\relax}
\providecommand{\BIBforeignlanguage}[2]{{%
\expandafter\ifx\csname l@#1\endcsname\relax
\typeout{** WARNING: IEEEtran.bst: No hyphenation pattern has been}%
\typeout{** loaded for the language `#1'. Using the pattern for}%
\typeout{** the default language instead.}%
\else
\language=\csname l@#1\endcsname
\fi
#2}}
\providecommand{\BIBdecl}{\relax}
\BIBdecl

\bibitem{mar55}
J.~Marschak, ``Elements for a theory of teams,'' \emph{Management Science},
  vol.~1, pp. 127--137, 1955.

\bibitem{rad62}
R.~Radner, ``Team decision problems,'' \emph{Ann. Math. Statist.}, vol.~33, pp.
  857--881, 1962.

\bibitem{wit71}
H.~Witsenhausen, ``Separation of estimation and control for discrete time
  systems,'' \emph{Proceedings of the IEEE}, vol.~59, pp. 1557--1566, 1971.

\bibitem{wit75}
------, ``The intrinsic model for discrete stochastic control: Some open
  problems,'' \emph{Lecture Notes in Econ. and Math. Syst., Springer-Verlag},
  vol. 107, pp. 322--335, 1975.

\bibitem{wit88}
------, ``Equivalent stochastic control problems,'' \emph{Math. Control Signal
  Systems}, vol.~1, no.~1, pp. 3--11, 1988.

\bibitem{WitsenStandard}
H.~S. Witsenhausen, ``A standard form for sequential stochastic control,''
  \emph{Mathematical Systems Theory}, vol.~7, pp. 5--11, 1973.

\bibitem{WitsenhausenSIAM71}
------, ``On information structures, feedback and causality,'' \emph{SIAM J.
  Control}, vol.~9, pp. 149--160, May 1971.

\bibitem{Wit68}
H.~Witsenhausen, ``A counterexample in stochastic optimum control,'' \emph{SIAM
  J. Control Optim.}, vol.~6, no.~1, pp. 131--147, 1968.

\bibitem{YukselBasarBook}
S.~Y\"uksel and T.~Ba\c{s}ar, \emph{Stochastic Networked Control Systems:
  Stabilization and Optimization under Information Constraints}.\hskip 1em plus
  0.5em minus 0.4em\relax New York, NY: Springer-Birkh\"auser, 2013.

\bibitem{PaTs86}
C.~Papadimitriou and J.~Tsitsiklis, ``Intractable problems in control theory,''
  \emph{SIAM J. Control Optim.}, vol.~24, no.~4, pp. 639--654, 1986.

\bibitem{GuYuBaLa15}
A.~Gupta, S.~Y\"uksel, T.~Basar, and C.~Langbort, ``On the existence of optimal
  policies for a class of static and sequential dynamic teams,'' \emph{SIAM J.
  Control Optim.}, vol.~53, no.~3, pp. 1681--1712, 2015.

\bibitem{Sal20-D}
N.~{Saldi}, ``A topology for team policies and existence of optimal team
  policies in stochastic team theory,'' \emph{IEEE Transactions on Automatic
  Control}, vol.~65, no.~1, pp. 310--317, 2020.

\bibitem{YukselWitsenStandardArXiv}
S.~Y\"uksel, ``A universal dynamic program and refined existence results for
  decentralized stochastic control,'' \emph{SIAM Journal on Control and
  Optimization, 2020; also in arXiv:1803.05811}.

\bibitem{KraMar82}
J.~Krainak, J.~L. Speyer, and S.~Marcus, ``Static team problems -- part {I}:
  {S}ufficient conditions and the exponential cost criterion,'' \emph{IEEE
  Trans. Autom. Control}, vol.~27, pp. 839--848, April 1982.

\bibitem{WaSc00}
P.~D. Waal and J.~V. Schuppen, ``A class of team problems with discrete action
  spaces: optimality conditions based on multimodularity,'' \emph{SIAM J.
  Control Optim.}, vol.~38, no.~3, pp. 875--892, 2000.

\bibitem{CDCTutorial}
A.~Mahajan, N.~Martins, M.~Rotkowitz, and S.~Y\"uksel, ``Information structures
  in optimal decentralized control,'' in \emph{IEEE Conference on Decision and
  Control}, Hawaii, USA, 2012.

\bibitem{NayyarBookChapter}
A.~Nayyar, A.~Mahajan, and D.~Teneketzis, ``The common-information approach to
  decentralized stochastic control,'' in \emph{{\em Information and Control in
  Networks}, Editors: G. Como, B. Bernhardsson, A. Rantzer}.\hskip 1em plus
  0.5em minus 0.4em\relax Springer, 2013.

\bibitem{NayyarMahajanTeneketzis}
------, ``Optimal control strategies in delayed sharing information
  structures,'' \emph{IEEE Transactions Automatic Contr.}, vol.~56, pp.
  1606--1620, 2011.

\bibitem{MahajanThesis}
A.~Mahajan, \emph{Sequential decomposition of sequential teams: applications to
  real-time communication and networked control systems}.\hskip 1em plus 0.5em
  minus 0.4em\relax University of Michigan, Ann Arbor: Ph.D. Dissertation,
  2008.

\bibitem{MahTen09}
A.~Mahajan and D.~Teneketzis, ``Optimal design of sequential real-time
  communication systems,'' \emph{IEEE Transactions on Information Theory},
  vol.~55, pp. 5317--5338, November 2009.

\bibitem{marrad72}
J.~Marshak and R.~Radner, \emph{Economic Theory of Teams}.\hskip 1em plus 0.5em
  minus 0.4em\relax New Haven, CT: Yale University Press, 1972.

\bibitem{Bas87}
S.~Li and T.~Ba\c{s}ar, ``Distributed algorithms for the computation of
  noncooperative equilibria,'' \emph{Automatica}, vol.~23, pp. 523--533, 1987.

\bibitem{GeScPi19}
M.~Geist, B.~Scherrer, and O.~Petquin, ``A theory of regularized {M}arkov
  decision processes,'' arXiv:1901.11275, 2019.

\bibitem{NeJoGo17}
G.~Neu, A.~Jonsson, and V.~Gomez, ``A unified view of entropy-regularized
  {M}arkov decision processes,'' arXiv:1705.07798, 2017.

\bibitem{SaYuLi17-app}
N.~{Saldi}, S.~{Y\"{u}ksel}, and T.~{Linder}, ``Finite model approximations and
  asymptotic optimality of quantized policies in decentralized stochastic
  control,'' \emph{IEEE Transactions on Automatic Control}, vol.~62, no.~5, pp.
  2360--2373, 2017.

\bibitem{charalambous2014equivalence}
C.~D. Charalambous and N.~U. Ahmed, ``Equivalence of decentralized stochastic
  dynamic decision systems via {G}irsanov's measure transformation,'' in
  \emph{IEEE Conference on Decision and Control (CDC)}.\hskip 1em plus 0.5em
  minus 0.4em\relax IEEE, 2014, pp. 439--444.

\bibitem{Sha07}
S.~Shalev-Shwartz, ``Online learning: Theory, algorithms, and applications,''
  Ph.D. dissertation, The Hebrew University of Jerusalem, 2007.

\bibitem{KaYu20}
A.~D. Kara and S.~Y\"uksel, ``Robustness to incorrect system models in
  stochastic control,'' \emph{SIAM Journal on Control and Optimization},
  vol.~58, no.~2, pp. 1144--1182, 2020.

\bibitem{KaYu19}
------, ``Robustness to incorrect priors in partially observed stochastic
  control,'' \emph{SIAM Journal on Control and Optimization}, vol.~57, no.~3,
  pp. 1929--1964, 2019.

\bibitem{OrRh00}
J.~M. Ortega and W.~Rheinboldt, \emph{Iterative solution of nonlinear equations
  in several variables}.\hskip 1em plus 0.5em minus 0.4em\relax SIAM, 2000.

\bibitem{Ber83}
D.~P. Bertsekas, ``Distributed asynchronous computation of fixed points,''
  \emph{Mathematical Programming}, vol.~27, pp. 107--120, 1983.

\bibitem{LiBa87}
{Shu Li} and T.~{Ba\c{s}ar}, ``Asymptotic agreement and convergence of
  asynchronous stochastic algorithms,'' \emph{IEEE Transactions on Automatic
  Control}, vol.~32, no.~7, pp. 612--618, 1987.

\bibitem{Bau78}
G.~Baudet, ``Asynchronous iterative methods for multiprocessors,''
  \emph{Journal of the Association for Computing Machinery}, vol.~25, no.~2,
  pp. 226--244, 1978.

\bibitem{Geo11}
H.~Georgii, \emph{Gibbs Measures and Phase Transitions}, ser. De Gruyter
  studies in mathematics.\hskip 1em plus 0.5em minus 0.4em\relax De Gruyter,
  2011.

\end{thebibliography}

% Generated by IEEEtran.bst, version: 1.14 (2015/08/26)

\end{document}